\definecolor{myGreen}{rgb}{0,0.5,0}
\newtheorem{prop}[theorem]{Proposition}
\crefname{hypothesis}{Hypothesis}{Hypotheses}
\title{Finite domain effects in steady state solutions of Poisson-Nernst-Planck equations}
\author{Doron Elad\thanks{Department of Mathematics, Technion, Haifa 32000, Israel and Department of Applied Mathematics, Tel Aviv University, Tel Aviv 69978, Israel
  (\email{doronelad@mail.tau.ac.il}).}
\and Nir Gavish\thanks{Department of Mathematics, Technion, 923 Amado Bldg., Haifa 32000, Israel
  (\email{ngavish@technion.ac.il}).} }
\begin{document}

\maketitle

\begin{abstract}
Steady-state solutions of the Poisson-Nernst-Planck model are studied in the asymptotic limit of large, but finite domains.  By using asymptotic matching for integrals, we derive an approximate solution for the steady-state equation with exponentially small error with respect to the domain size.   The approximation is used to quantify the extent of finite domain effects over the full parameter space.
Surprisingly, already for small applied voltages (several thermal voltages), we found that finite domain effects are significant even for large domains (on the scale of hundreds of Debye lengths).   Namely, the solution near the boundary, i.e., the boundary layer (electric double layer) structure, is sensitive to the domain size even when the domain size is many times larger than the characteristic width of the boundary layer.    We focus on this  intermediate regime between confined domains and `essentially infinite' domains, and study how the domain size effects the solution properties. We conclude by providing an outlook to higher dimensions with applications to ion channels and porous electrodes.  
\end{abstract}

\begin{keywords}
  Poisson--Nernst--Planck, singular perturbations, range-splitting, matched asymptotic expansion, finite domain effects
\end{keywords}

\begin{AMS}
  34A34, 92C05, 34E15, 35B25, 34B16, 35B40
\end{AMS}

\section{Introduction}

Understanding the distribution of ions near charged surfaces is a fundamental problem in electrochemistry and biology, with a wide variety of applications, including water desalination, fuel cells, ion channels, and more.  Ions concentrate near charged interfaces, creating a layer of excess (counter-)charge that screens the surface charge.  This layer is known as the electrical double layer (EDL).  
The screening length, or the characteristic width of the electric double layer, is on the scale of a few nanometers in typical applications, and identifies with the so called Debye length for dilute electrolytes in large enough domains and near interfaces with low surface charge density.  

Originating from the seminal work of Nernst and Planck~\cite{nernst1889elektromotorische,planck1890ueber1,planck1890ueber2}, the Poisson-Nernst-Planck (PNP) model provides a theoretical basis for ion transport and redistribution in electrolytes.  
The PNP model and its generalizations had been extensively studied and used for electrolyte solutions, see e.g., \cite{bazant2005current,bazant2004diffuse,eisenberg1999structure,bolintineanu2009poisson,eisenberg1996computing,chen1993charges,eisenberg1998ionic,hollerbach2000predicting,hollerbach2001two,coalson2005poisson,dieckmann1999exploration,eisenberg2010crowded,chen1997permeation,bazant2004diffuse}, as well as other applications~\cite{markowich1990c}.  For a review of the different models, see~\cite{bazant2009towards} and references within. In many cases, studies focused on steady-state solutions of these models, e.g., current-voltage relations in ion channels \cite{naylor2016molecular} and electrochemical thin films \cite{bazant2005current}, salt adsorption in capacitive deionization cells \cite{porada2013review,biesheuvel2010membrane,suss2017size},  or capacitance of capacitors \cite{ji2014capacitance}.  

The PNP model with no-flux boundary conditions for the ionic species conserves the total concentration of each of the ionic species.  Therefore, the description of its steady-state involves the non-local constraint of total charge concentration.  Indeed, the steady-state of the PNP model is described by the Charge-Conserving-Poisson-Boltzmann (CCPB) equation~\cite{lee2010new,wan2014self,lee2015boundary,lee2014charge,beaume2011electrolyte} in which charge conservation is manifested by a non-local, integral, term.  
The non-local term couples the EDL region and the bulk: As ions concentrate near the boundary to screen charge, they are depleted from the interior of the domain.  For large enough domains, the amount of ions depleted from the interior is negligible with respect to the overall amount of ions, and therefore finite domain effects are negligible.  In this case,  the steady-state of the PNP model can be approximated by the solution of Poisson-Boltzmann (PB) equation, which involves only local terms.  On the other extreme, in confined domains, on the magnitude of several Debye lengths, the EDLs from the two boundaries overlap and so finite domain effects are dominant.  Here, we show that there exists an intermediate regime of large domains in which finite domain effects are significant.

In this work, we study steady-state solutions of the Poisson-Nernst-Planck equations in finite domains.   
Particularly, we focus on the asymptotic regime of a large, but finite, domain size.   
Previous works have considered this asymptotic regime by boundary layer analysis~\cite{chen1997qualitative,wang2014singular}.  Here, we take a different approach, and directly approximate the singular integrals appearing in the non-local CCPB equation.  The result is a complementary analysis that gives rise to an accurate approximation of the solution with exponentially small error with respect to the domain size parameter, and that highlights the role of finite-domain effects.  Surprisingly, we find that even for relatively large domains, on a magnitude of hundreds of Debye lengths, finite domain effects are significant.  These results are relevant for numerical simulations in which PNP models in infinite domains are approximated by PNP models in large, but finite domains, and for a wide range of applications, including ion channels~\cite{eisenberg2010crowded}, dendritic spines~\cite{matsuzaki2004structural,yuste1995dendritic}, submicron gap capacitors~\cite{hourdakis2006submicron,wallash2003electrical,chen2006electrical}, and microfluidics~\cite{whitesides2006origins}.

The paper is organized as follows. In \Cref{sec:model}, we provide a brief mathematical review of the PNP model and of the CCPB equation that describes its steady-state.  In~\Cref{sec:perturbation} we use asymptotic matching methods to obtain a highly accurate approximation of the steady-state solution of PNP in a finite domain with an error that decreases exponentially with domain size.  In~\Cref{sec:finiteDomainEffects} this approximation is used to reveal when finite domain effects are significant and to quantify their nature.  Particularly, in~\Cref{sec:distinct}, we identify three parameter regimes: A region corresponding to confined domains, a region corresponding to very large domains in which finite-domain effects are negligible, and an intermediate regime where the the domain is large enough so that the solution reaches an electroneutral bulk, but finite domain effects are yet significant.  In~\Cref{sec:screening}, we further quantify the effects of finite domain size on the screening length.  Our results show that even for relatively large domains, on the magnitude of hundreds of Debye lengths, finite domain effects are significant.  
A methodology to numerically study finite domain effects in generalized PNP models is presented in~\Cref{sec:diffBC}, and demonstrated on the PNP-Stern model.  In \Cref{sec:highdim} we provide an outlook to higher dimensions, and demonstrate applications for ionic channels and porous electrodes.
Concluding remarks are presented in \Cref{sec:conc}.

\section{Model}\label{sec:model}
We consider the Poisson-Nernst-Planck (PNP) model for a 1:1 ionic solution bounded between two electrodes located at~$x=\pm L/2$,
\begin{equation} \label{eq:PNPdimensional}
\begin{split}
&p_t(t,x)=-  \frac{\partial}{\partial x} J_p, \qquad J_p = -D\left[p_x +p \frac{q}{k_B T}  \phi_x\right],\\
&n_t(t,x)=-  \frac{\partial}{\partial x} J_n , \qquad J_n = -D\left[n_x-n \frac{q}{k_B T}  \phi_x\right],\\
&-\epsilon_0\epsilon_r \phi_{xx}(x) = q \left( p-n \right) , \qquad  -\frac{L}{2}<  x < \frac{L}{2},\quad t>0,
\end{split}
\end{equation}
with the initial conditions 
	\begin{equation}
	p(x,0)=p_0(x), \quad n(x,0)=n_0(x), \quad \phi(x,0)=\phi_0(x).
	\end{equation}
Here~$\phi$ is the electrostatic potential, $p$ and~$n$ are the concentrations of positively and negatively charged ions, respectively.  Additionally, $D$ is the diffusion coefficient (assumed to be equal for the two ionic species), $q$ is the elementary charge, $k_B$ is the Boltzmann constant, $T$ is temperature,~$\epsilon_0$ is the vacuum permittivity and~$\epsilon_r$ is the relative permittivity.  

In what follows, we introduce the non-dimensional variables
	\begin{equation}\label{eq:scaledVars}
		\tilde{x}=\frac{x}{\lambda_D} , \quad \tilde{\phi}=\frac{q \phi}{k_B T}, \quad \tilde p=\frac{p}{\bar{c}}, \quad \tilde n=\frac{n}{\bar{c}},\quad \quad \tilde{L}=\frac{L}{\lambda_D}, \quad \lambda_D=\sqrt{\frac{\epsilon_0\epsilon_r k_B T }{2 \bar{c}q^2 }},
	\end{equation}
	where~$\lambda_D$ is the Debye length and~$\bar{c}$ is the average initial ionic concentrations, which is assumed to be equal for the two ionic species, i.e., the initial ionic concentrations are chosen so that the electrolyte solution is globally electroneutral
\begin{equation}\label{eq:barc}
\bar{c}:=\frac{1}{L} \int_{-L/2}^{L/2} p_0(x)dx=  \frac{1}{L} \int_{-L/2}^{L/2} n_0(x)dx.
\end{equation}
The non-dimensional version of PNP~\eqref{eq:PNPdimensional} (presented after omitting the tildes) reads as
\begin{subequations}\label{eq:PNP}
\begin{equation}\label{eq:NernstPlanck}
p_t=-  \frac{\partial}{\partial x} J_p,\quad J_p=p_x+p\phi_x,\qquad n_t=-  \frac{\partial}{\partial x} J_n,\quad J_n=n_x-n\phi_x, 
\end{equation}
\begin{equation}\label{eq:Poisson}
\phi_{xx}=\frac{n-p}2,
\end{equation}
in the domain~$-\frac{L}{2} < x < \frac{L}{2}$ with initial conditions
\begin{equation}	
p(x,0)=p_0(x), \quad n(x,0)=n_0(x), \quad \phi(x,0)=\phi_0(x),
\end{equation}
that satisfy 
\begin{equation}\label{eq:barc_normalized}
\frac{1}{L} \int_{-L/2}^{L/2} p_0(x)dx=  \frac{1}{L} \int_{-L/2}^{L/2} n_0(x)dx=1.
\end{equation}

In this work, we focus on no-flux boundary conditions, and fixed applied voltage~$V$ on the electrodes
	\begin{equation}\label{eq:PNP_noflux_phiBC}
J_p(x=\pm L/2,t)=J_n(x=\pm L/2,t)=0,\quad \phi(\pm L/2,t)=\pm V.
	\end{equation}
	\end{subequations}
See Section~\ref{sec:diffBC} for additional cases.  

Equations~\eqref{eq:PNP} have a unique steady-state solution which satisfies the charge conserving Poisson-Boltzmann (CCPB) equation~\cite{lee2014charge,lee2010new,wan2014self}
	\begin{equation} \label{eq:CCPB}
	\phi_{xx}= \frac{ \sinh \phi }{\frac{1}{L} \int_{-\frac{L}{2}}^{\frac{L}{2}} e^{\phi}dx} , \qquad \phi(\pm L/2)=\pm V.
	\end{equation}
For completeness, we provide here a brief review of the derivation of the CCPB equation, and refer the reader to~\cite{lee2010new,wan2014self} for more details:  Under no-flux boundary conditions, see~\eqref{eq:PNP_noflux_phiBC}, 
the PNP equations~\eqref{eq:PNP} preserve the average ionic concentration of each ion during the system evolution for all $t>0$, and therefore, in accordance with~\eqref{eq:barc_normalized}, the steady-states solutions satisfy
\begin{equation}\label{eq:avgconc_time}
\frac{1}{L} \int_{-L/2}^{L/2} p (x)dx=  \frac{1}{L} \int_{-L/2}^{L/2} n (x)dx=1. 
\end{equation}
 Furthermore, at steady-state, the Nernst-Planck equations~\eqref{eq:NernstPlanck}  reduce to
$$
J_p = p_x+p\phi_x=0,\quad J_n = n_x-n\phi_x=0,
$$
and can be integrated to yield 

	\begin{equation}\label{eq:Boltzmann1}
	p(x)=\alpha e^{-\phi(x)},\quad n(x)=\beta e^{\phi(x)},
	\end{equation}
where $\alpha$ and $\beta$ are integration constants.  Taking the average of both sides in each of the equations in~\eqref{eq:Boltzmann1}, substituting~\eqref{eq:avgconc_time}, and isolating~$\alpha$ and $\beta$ yields
\begin{equation}\label{eq:alphaBeta}
\alpha=\frac{1}{\frac1L\int_{-L/2}^{L/2} e^{-\phi(x)}dx},\quad \beta=\frac{1}{\frac1L\int_{-L/2}^{L/2} e^{\phi(x)}dx}.
\end{equation}
The following Lemma follows from~\cite[Theorem 1.2]{lee2010new},  and is used frequently in this work.
\begin{lemma}\label{lemma:LemmaOnCCPB} 
	The solution $\phi$ of equation \eqref{eq:CCPB} is odd and monotonically increasing. In particular,
	$\phi(0)=0$ 
and
\begin{equation}\label{eq:alphaBeta_equal} 
\alpha=\frac{1}{\frac1L\int_{-L/2}^{L/2} e^{-\phi(x)}dx}=\frac{1}{\frac1L\int_{-L/2}^{L/2} e^{\phi(x)}dx}=\beta.
\end{equation}
\end{lemma}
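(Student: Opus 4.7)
The plan is to prove the oddness by a reflection/uniqueness argument, deduce $\phi(0)=0$ and $\alpha=\beta$ as immediate corollaries, and then obtain strict monotonicity from a standard first-integral (energy) argument.

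\textbf{Step 1 (oddness by reflection).} I would work with the pre-symmetrized form of the steady-state equation, i.e.\ $\phi_{xx}=(\beta e^{\phi}-\alpha e^{-\phi})/2$ with $\alpha,\beta$ given by~\eqref{eq:alphaBeta}, rather than with the already-symmetrized form~\eqref{eq:CCPB}. Define $\psi(x):=-\phi(-x)$. A direct chain-rule computation gives $\psi_{xx}(x)=-\phi_{xx}(-x)=(\alpha e^{\psi(x)}-\beta e^{-\psi(x)})/2$, and the change of variables $y=-x$ in the integrals defining the constants transforms~\eqref{eq:alphaBeta} into the same relations but with $\alpha$ and $\beta$ exchanged. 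Thus $\psi$ solves the same nonlocal steady-state problem on $(-L/2,L/2)$, with the same boundary values $\psi(\pm L/2)=\pm V$ (since $-\phi(\mp L/2)=\pm V$). Invoking the uniqueness of the steady-state solution stated right after~\eqref{eq:PNP_noflux_phiBC} (and established in~\cite{lee2010new}), we conclude $\psi\equiv\phi$, i.e.\ $\phi(-x)=-\phi(x)$.

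\textbf{Step 2 ($\phi(0)=0$ and $\alpha=\beta$).} Plugging $x=0$ into oddness gives $\phi(0)=-\phi(0)$, hence $\phi(0)=0$. For the integral identity, the substitution $y=-x$ combined with $\phi(-x)=-\phi(x)$ yields
\begin{equation*}
\int_{-L/2}^{L/2}e^{-\phi(x)}\,dx=\int_{-L/2}^{L/2}e^{\phi(-x)}\,dx=\int_{-L/2}^{L/2}e^{\phi(y)}\,dy,
\end{equation*}
which is precisely $\alpha=\beta$ by~\eqref{eq:alphaBeta}. (As a byproduct, this retroactively justifies the $\sinh$ form written in~\eqref{eq:CCPB}.)

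\textbf{Step 3 (strict monotonicity).} Once $\alpha=\beta$ is known, the equation reduces to the local autonomous ODE $\phi_{xx}=\alpha\sinh\phi$. Multiplying by $\phi_x$ and integrating from $0$ to $x$ yields the first integral
\begin{equation*}
\bigl(\phi_x(x)\bigr)^2=\bigl(\phi_x(0)\bigr)^2+2\alpha\bigl(\cosh\phi(x)-1\bigr)\ \ge\ \bigl(\phi_x(0)\bigr)^2.
\end{equation*}
If $\phi_x(0)=0$, then together with $\phi(0)=0$ the uniqueness theorem for ODEs forces $\phi\equiv 0$, contradicting $\phi(\pm L/2)=\pm V$ for $V\neq 0$. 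Hence $\phi_x(0)\neq 0$, and the inequality above shows $\phi_x$ cannot vanish anywhere, so it keeps a constant sign. The sign is determined by the boundary data: if $\phi_x(0)<0$ then $\phi$ would be negative just to the right of $0$, yet $\phi(L/2)=V>0$, forcing $\phi_x$ to vanish somewhere, a contradiction. Therefore $\phi_x>0$ throughout $[-L/2,L/2]$, i.e.\ $\phi$ is strictly increasing. (The case $V<0$ is symmetric; $V=0$ gives the trivial solution.)

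\textbf{Main obstacle.} The only subtle point is Step~1: because the CCPB equation contains a nonlocal integral term, verifying that the reflection $\psi(x)=-\phi(-x)$ genuinely solves \emph{the same} problem requires tracking carefully how the normalization constants $\alpha,\beta$ transform under $x\mapsto-x$, and it is cleanest to argue on the pre-symmetrized form~\eqref{eq:Boltzmann1}--\eqref{eq:alphaBeta} rather than on~\eqref{eq:CCPB}. After that, uniqueness of the steady state does all the work, and the remaining deductions are routine.
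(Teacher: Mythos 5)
Your proof is correct. Note, however, that the paper does not actually prove this lemma: it simply states that it ``follows from [Theorem 1.2, lee2010new]'' and moves on, so there is no internal argument to compare against. Your reflection-plus-uniqueness route is a legitimate and essentially self-contained alternative: the key observation that $\psi(x)=-\phi(-x)$ solves the \emph{same} nonlocal problem because the change of variables $x\mapsto -x$ exactly swaps the two normalization constants in~\eqref{eq:alphaBeta} is the right way to handle the nonlocal term, and your choice to argue on the pre-symmetrized form $\phi_{xx}=(\beta e^{\phi}-\alpha e^{-\phi})/2$ rather than on~\eqref{eq:CCPB} avoids the circularity of assuming $\alpha=\beta$ up front. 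The one dependency you should flag explicitly is that the uniqueness of the steady state, which does all the work in Step~1, is itself only asserted in this paper by citation to the same reference (lee2010new and related works); so your argument does not eliminate the external dependence, but it does reduce it to the single input ``the nonlocal boundary-value problem has at most one solution,'' from which oddness, $\phi(0)=0$, $\alpha=\beta$, and (via your first-integral argument, which is standard and correct, including the ODE-uniqueness step ruling out $\phi_x(0)=0$ when $V\neq 0$) strict monotonicity all follow. Two minor points of hygiene: the statement ``monotonically increasing'' implicitly takes $V>0$, which you correctly note; and in Step~3 the sign argument can be shortened, since constant sign of $\phi_x$ with $\phi_x(0)<0$ already gives $\phi(L/2)<\phi(0)=0<V$ directly, without needing to locate a zero of $\phi_x$.
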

Symmetry considerations, in particular, imply that~$\alpha=\beta$, see~\eqref{eq:alphaBeta_equal}.  Finally, substituting~\eqref{eq:Boltzmann1} and~\eqref{eq:alphaBeta_equal} into Poisson's equation~\eqref{eq:Poisson} yields the CCPB equation~\eqref{eq:CCPB}.

In the limit of an infinite domain size,~$L\to\infty$, the charge-conserving Poisson-Boltzmann equation~\eqref{eq:CCPB} reduces to the classical Poisson-Boltzmann equation,~see~\cite{lee2010new} and also Section~\ref{sec:formulation},
\begin{equation}\label{eq:PB}
\phi_{xx}=\sinh\phi.
\end{equation}
In this case,~$\alpha=\beta=1$.
The point~$x=0$ is farthest from the boundaries, i.e., it is in the bulk of the electrolyte solution.    
For sufficiently large domain size,~$\phi(x)\approx0$ for~$x=O(1)$ and hence the parameter,~$\alpha$, can be identified as the (normalized) ion concentration in the bulk, see~\eqref{eq:Boltzmann1}.

\subsection{Equivalent formulation of the CCPB equation}\label{sec:formulation}

We consider an equivalent formulation of the CCPB equation~\eqref{eq:CCPB} in terms of its inverse function~$x(\phi)$.  A similar formulation is available for the PB equation~\eqref{eq:PB}, see, e.g., \cite[Appendix A.1]{schmickler2010interfacial}.  Here we briefly review the derivation details adapted to the CCPB equation~\eqref{eq:CCPB}.  

Multiplying~\eqref{eq:CCPB} by $\frac{\partial \phi}{\partial x}$, integrating and using the monotonicity of $\phi$ (Lemma \ref{lemma:LemmaOnCCPB}) implies
\begin{equation}
\frac{\partial \phi}{\partial x}=\sqrt{  \frac{ 2 }{\frac{1}{L} \int_{-\frac{L}{2}}^{\frac{L}{2}} e^{\phi}dx}   \cosh \phi + C  },
\end{equation}
where $C$ is a constant of integration.
Inserting  $\phi(0)=0$ (see Lemma \ref{lemma:LemmaOnCCPB}) implies
\begin{equation}
C=\phi_x^2(0)-\frac{ 2 }{\frac{1}{L} \int_{-\frac{L}{2}}^{\frac{L}{2}} e^{\phi}dx},
\end{equation}
which yields the following formula for the inverse of $\phi(x)$ for~$0<x<L/2$,
	\begin{equation} \label{eq:implicit}
	x\left(\phi \right)=
	\int_0 ^\phi \frac{d\xi}{\sqrt{4 \alpha \sinh^2 \left(\frac{\xi}{2}\right)+\phi_x^2 (0)}} , \qquad x(V)=\frac{L}{2},
	\end{equation}
	where~$\alpha$~is given by~\eqref{eq:alphaBeta}, or equivalently
	\begin{equation} \label{eq:xofPhi2}
	x\left(\phi \right)=\frac{L}{2}
	\frac{\int_0 ^\phi \frac{1}{\sqrt{ \sinh^2 \left(\frac{\xi}{2}\right)+\frac{\phi_x^2 (0)}{4\alpha}}} d\xi }{\int_0 ^V \frac{1}{\sqrt{ \sinh^2 \left(\frac{\xi}{2}\right)+\frac{\phi_x^2 (0)}{4\alpha}}} d\xi},\quad 0\le \phi \le V.
	\end{equation}
Since~$\phi(x)$ is an odd function, see Lemma~\ref{lemma:LemmaOnCCPB}, the values of~$x(\phi)$ for~$-V\le \phi\le 0$ are readily defined via the two equivalent relations~\eqref{eq:implicit} and~\eqref{eq:xofPhi2}.
The latter formulation reveals that, given the values of $L, \, V$, the solution of \eqref{eq:implicit} depends on the ratio $\phi_x^2 (0)/
\alpha$, rather than on the two separate quantities $\phi_x(0)$ and~$\alpha$.  
Accordingly, let us define
\begin{equation}\label{eq:eps_def}
\varepsilon:=\frac{\phi_x(0)}{2\sqrt{\alpha}}.
\end{equation}
The observation that the solution of \eqref{eq:implicit} depends on~$\varepsilon$, rather than on two independent quantities enables and motivates the study of the solution in the asymptotic regime of small~$\varepsilon$.
In what follows, we will show that~$0<\varepsilon\ll1$ corresponds to a large domain size, and focus on this regime.
\section{Approximation of $x(\phi)$ by singular perturbation theory} \label{sec:perturbation}
Equation~\eqref{eq:implicit} defines the inverse steady-state solution~$x(\phi)$ of~\eqref{eq:PNP} in terms of $\alpha$ and~$\phi_x(0)$, rather than solely as a function of the natural problem parameters, the domain size~$L$ and the applied voltage~$V$.    We now use asymptotic analysis to approximate the inverse steady-state solution given the parameters~$L$ and~$V$.
\subsection{Problem formulation}
In the case of an infinite domain, $L=\infty$, one obtains that $\phi_x (0)=0$, where~$\phi(x)$ is the steady-state solution of~\eqref{eq:PNP}, see \cite[Section 3.1]{gavish2016structure}.  In this case,~$\varepsilon=0$, see~\eqref{eq:eps_def}, hence~\eqref{eq:xofPhi2} implies that~$\phi(x)$ satisfies the Poisson-Boltzmann equation~\eqref{eq:PB} with~$\alpha=1$.  Accordingly, a large, but finite, domain size~$L\gg1$, corresponds to a regime when~$0<\varepsilon\ll1$, $|\phi_x(0)|\ll 1$ and $\left| \alpha-1\right|\ll 1$.  
In what follows, we focus on this regime of a large domain size.  For convenience, we rewrite~\eqref{eq:implicit} in terms of an  integral~$I(\phi;\varepsilon)$ with singular behavior as~$\varepsilon\to0$,
\begin{subequations}\label{eq:intRepresentation2}
\begin{equation}
	x(\phi) = \frac{1}{2\sqrt{\alpha}} I\left( \phi; \varepsilon\right), \qquad 	 x(V)=\frac{L}{2}, \qquad \alpha=\left( \frac{1}{L} \int_{-L/2}^{L/2} e^\phi dx \right)^{-1},
	\end{equation}
where 
\begin{equation} \label{eq:defOfI}
I(\phi;\varepsilon):= \int_0 ^\phi \frac{dx}{\sqrt{ \sinh^2 \left(x/2 \right) + \varepsilon^2}}.
\end{equation}
 \end{subequations}

\subsection{Evaluation of the integral~$I(\phi;\varepsilon)$ with singular behavior}
The integral~$I(\phi;\varepsilon)$, see~\eqref{eq:defOfI}, diverges as~$\varepsilon\to0$.  This is an inherent property of the problem formulation, since~$\varepsilon=0$ corresponds to an infinite domain size, 
while~$\varepsilon>0$ corresponds to a finite domain.

The following proposition evaluates the integral~$I(\phi;\varepsilon)$ for~$0<\varepsilon\ll1$ using range splitting and asymptotic matching, while exploiting the fact that the integrand of $I(\phi;\varepsilon)$ behaves differently in an inner region near $x=0$ where~$\sinh x \ll \varepsilon$ and in an outer region where~$\sinh x \gg \varepsilon$.
\begin{prop} \label{prop:expansion}
	Let $0<\varepsilon\ll 1$. Then, the integral~\eqref{eq:defOfI} satisfies
	\[
	I(\phi;\varepsilon)=I^{\rm approx}(\phi;\varepsilon)+E(\phi),
	\]
	where
	\begin{equation}\label{eq:LemmaForI}
	I^{\rm approx}(\phi;\varepsilon)=\begin{cases}
	2\,{\rm arcsinh}\left(\frac{\phi}{2\varepsilon}\right), & 0\le \phi \leq \eta(\varepsilon)\\
	2\log\frac{4}{\varepsilon}-4\,{\rm arccoth}\left(e^{\frac{\phi}2}\right), & \phi > \eta(\varepsilon)
	\end{cases} ,
	\end{equation}
$\eta(\varepsilon)=\varepsilon^{\frac34}$ and the error~$|E(\phi)|\le c\,\sqrt\varepsilon$ where~$c$ is a constant.
\end{prop}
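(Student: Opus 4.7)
The plan is to evaluate $I(\phi;\varepsilon)$ by range splitting at an intermediate scale $\eta(\varepsilon)=\varepsilon^{3/4}$, chosen so that $\varepsilon\ll\eta\ll 1$ as $\varepsilon\to 0$. On the inner range $[0,\min(\phi,\eta)]$ the integration variable is uniformly small, so I replace $\sinh(x/2)$ by $x/2$, using the Taylor bound $|\sinh^2(x/2)-(x/2)^2|\le Cx^4$. On the outer range $[\eta,\phi]$, which appears only when $\phi>\eta$, the ratio $\varepsilon^2/\sinh^2(x/2)$ is uniformly small, so I drop the $\varepsilon^2$ under the square root.

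Both approximating integrals are elementary. The inner integral gives $2\,{\rm arcsinh}(\min(\phi,\eta)/(2\varepsilon))$; when $\phi\le\eta$ this is exactly the first branch of \eqref{eq:LemmaForI}. The outer integral gives $4\,{\rm arccoth}(e^{\eta/2})-4\,{\rm arccoth}(e^{\phi/2})$. For the second branch, the crucial step is matching. Using $2\,{\rm arcsinh}(y)=\log(4y^2)+O(y^{-2})$ as $y\to\infty$ and $4\,{\rm arccoth}(e^{s/2})=2\log(4/s)+O(s^2)$ as $s\to 0^+$, the inner piece evaluated at $\phi=\eta$ and the lower endpoint of the outer piece combine as
\[
2\,{\rm arcsinh}\!\left(\tfrac{\eta}{2\varepsilon}\right)+4\,{\rm arccoth}(e^{\eta/2})=2\log\!\frac{4}{\varepsilon}+O\!\left(\eta^2+\tfrac{\varepsilon^2}{\eta^2}\right),
\]
the $\log\eta$ contributions from the two asymptotics cancelling exactly and leaving the clean constant $2\log(4/\varepsilon)$ that appears in \eqref{eq:LemmaForI}.

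For the error analysis I estimate the discarded integrands with the elementary inequality $|(A+\delta)^{-1/2}-A^{-1/2}|\le|\delta|/(2A^{3/2})$. On the inner range, with $A=(x/2)^2+\varepsilon^2$ and $|\delta|\le Cx^4$, the integrand bound is $Cx^4/((x/2)^2+\varepsilon^2)^{3/2}$; splitting the $x$-integral at $x=\varepsilon$ handles the regime where $A$ is dominated by $\varepsilon^2$, and integrating yields an inner contribution of $O(\eta^2)$. On the outer range, the bound reduces to $\varepsilon^2/(2\sinh^3(x/2))$; the dominant contribution comes from the endpoint $x=\eta$ and is estimated by $\int_\eta^1 8\varepsilon^2 x^{-3}\,dx=O(\varepsilon^2/\eta^2)$, while the tail $x\ge 1$ is exponentially small. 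Combined with the matching remainder, the total error is $O(\eta^2+\varepsilon^2/\eta^2)=O(\sqrt{\varepsilon})$ once $\eta=\varepsilon^{3/4}$ is inserted.

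The main obstacle I expect is less the calculus than the bookkeeping: establishing uniformity of the inner estimate near $x=0$, where $A$ itself becomes small, and verifying the exact cancellation in the matching so that the surviving constant is precisely $2\log(4/\varepsilon)$. Once these points are handled, the $\sqrt{\varepsilon}$ bound emerges from a routine optimization of $\eta^2+\varepsilon^2/\eta^2$; the chosen $\eta=\varepsilon^{3/4}$ is convenient but not optimal, as $\eta=\sqrt{\varepsilon}$ would balance the two error sources and yield $O(\varepsilon)$.
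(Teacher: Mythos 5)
Your proposal is correct and follows essentially the same route as the paper's proof: the same range splitting at $\eta=\varepsilon^{3/4}$, the same inner approximation $\sinh^2(x/2)\approx x^2/4$ and outer approximation dropping $\varepsilon^2$, and the same matching of $2\,{\rm arcsinh}(\eta/(2\varepsilon))$ against $4\,{\rm arccoth}(e^{\eta/2})$ to produce the constant $2\log(4/\varepsilon)$ with an $O(\eta^2+\varepsilon^2/\eta^2)$ remainder. The only difference is in the error bookkeeping --- you integrate a pointwise bound on the discarded integrands where the paper factors out the correction via the mean value theorem for integrals --- and your closing observation that $\eta=\sqrt{\varepsilon}$ balances the two error sources to give $O(\varepsilon)$ is precisely the refinement carried out in Proposition~\ref{prop:expansion_refined}.
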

\begin{proof}
For all~$0<\eta<\phi$,~$I(\phi;\varepsilon)=I_1(\eta;\varepsilon)+I_2(\phi;\varepsilon)$, where
\begin{equation}\label{eq:I0I1}
I_1(\eta;\varepsilon)=\int_0 ^\eta \frac{dx}{\sqrt{ \sinh^2 \left( x/2 \right) + \varepsilon^2}},\quad I_2(\phi;\varepsilon)=\int_\eta^\phi \frac{dx}{\sqrt{ \sinh^2 \left( x/2 \right) + \varepsilon^2}}.
\end{equation}
To approximate~$I_1$, we focus on the regime~$\eta\ll1$ where
\[
\sinh^2 \left( \frac{x}2 \right)=\frac{x^2}4+O(x^4),\quad 0\le x\le \eta.
\]
In what follows, it becomes clear that choosing any~$\eta\ll1$, e.g.,~$\eta=
\varepsilon$ or~$\eta=\sqrt\varepsilon$, is insufficient to control the approximation error, and rather a more careful consideration of the choice of~$\eta$ is required.  Indeed, since 
\begin{equation}\label{eq:approxI0_sinh}
\sinh^2 \left( \frac{x}2 \right)+\varepsilon^2=\frac{x^2}4+\varepsilon^2+\tilde E_1(x),\quad \tilde  E_1(x)=O(x^4),
\end{equation}
we further restrict~$x^4\ll \varepsilon^2$ so that the error satisfies both~$ \tilde E_1(x)\ll \varepsilon^2$ and~$\tilde E_1(x)\ll x^2 $.
Let $0<\eta\ll \sqrt{\varepsilon}$. Substituting~\eqref{eq:approxI0_sinh} in~\eqref{eq:I0I1} yields
\begin{equation*}\begin{split}
I_1(\phi;\varepsilon)&=\int_0 ^\eta \frac{dx}{\sqrt{ \frac{x^2}4  + \varepsilon^2+\tilde E_1(x)}}=
\int_0 ^\eta \frac{dx}{\sqrt{ \frac{x^2}4  + \varepsilon^2}\sqrt{ 1+\frac{\tilde E_1(x)}{\frac{x^2}4  + \varepsilon^2}}}\\&=\frac{1}{\sqrt{ 1+\frac{\tilde E_1(\xi)}{\frac{\xi^2}4  + \varepsilon^2}}}\int_0 ^\eta \frac{dx}{\sqrt{ \frac{x^2}4  + \varepsilon^2}},\quad 0<\xi<\eta,
\end{split}\end{equation*}
where the last equality follows from the Mean Value Theorem for integrals.
Thus,
\begin{subequations}\label{eq:I1E1}
	\begin{equation} \label{eq:I1}
	I_1=I_1^{\rm approx}(\phi;\varepsilon)+E_1,\quad I_1^{\rm approx}(\phi;\varepsilon)= \int_0 ^\phi \frac{dx}{\sqrt{\varepsilon^2 + \frac{x^2}{4}}}=2\,\mbox{arcsinh}\left(\frac{\phi}{2\varepsilon}\right) \end{equation}
	where the error takes the form
	\begin{equation}\label{eq:E1}
	E_1=2\,\mbox{arcsinh}\left(\frac{\phi}{2\varepsilon}\right)\left\{1-\left[ 1+\frac{\tilde E_1(\xi)}{\frac{\xi^2}4  + \varepsilon^2}\right]^{-\frac12}\right\},\quad 0<\xi<\eta.
	\end{equation}
	\end{subequations}
The approximation of~$I_2$ leads to the range~$\varepsilon\ll\eta\ll \sqrt{\varepsilon}$, see below.  Let us consider~$\eta=c_\eta\,\varepsilon^{\frac34}$ where~$c_\eta$ is a constant.   Denote the bound~$M_1$ such that~$|\tilde E_1|\le M_1\eta^4$, see~\eqref{eq:approxI0_sinh}.  Then, 
for~$0<\phi<\eta=c_\eta\,\varepsilon^{\frac34}$, the error~\eqref{eq:E1} is bounded by
\begin{equation}\label{eq:E1bound}
|E_{1}|\le 2\,\mbox{arcsinh}\left(\frac{\eta}{2\varepsilon}\right)\left|\frac{\tilde E_1(\xi)}{\frac{\xi^2}4  + \varepsilon^2}\right|\le \frac{2c_\eta\eta}{2\varepsilon}\frac{M_1\eta^4}{\varepsilon^2}=c_\eta M_1\varepsilon^{\frac34}.
\end{equation}

To approximate~$I_2$, see~\eqref{eq:I0I1}, we focus on the regime~$\eta\gg\varepsilon$ where~$\sinh^2 \left(x/2\right)\gg\varepsilon^2$.
In this case,
\[
I_2(\phi;\varepsilon)=\left[1+\frac{\varepsilon^2}{\sinh^2\left( \frac{\xi}2 \right)}\right]^{-\frac12}\int_{\eta}^{\phi}\frac{dx}{\sinh\left( \frac{x}2 \right)},\quad \eta<\xi<\phi.
\]
Thus,
\begin{subequations}\label{eq:I2E2}
	\begin{equation} \label{eq:I2}\begin{split}
&I_2(\phi;\varepsilon)=I_2^{\rm approx}(\phi;\varepsilon)+E_2,\quad \\&I_2^{\rm approx}(\phi;\varepsilon)=\int_{\eta}^{\phi}\frac{dx}{\sinh\left( \frac{x}2 \right)}=
4\left[\mbox{arccoth}\left(e^{\frac\eta2}\right)-\mbox{arccoth}\left(e^{\frac\phi2}\right)\right],
\end{split}
	\end{equation} 
where
	\begin{equation} \label{eq:E2}
E_2=4\left[\mbox{arccoth}\left(e^{\frac\eta2}\right)-\mbox{arccoth}\left(e^{\frac\phi2}\right)\right]\left\{\left[1+\frac{\varepsilon^2}{\sinh^2\left( \frac{\xi}2 \right)}\right]^{-\frac12}-1\right\}.
	\end{equation} 
	\end{subequations}
Therefore, for~$\phi>\eta$
\begin{equation}\label{eq:I_phi_bigger_than_eta}
I^{\rm approx}(\phi)\approx I_1^{\rm approx}(\eta)+I_2^{\rm approx}(\phi)=2\log\frac{4}{\varepsilon}-4{\rm arccoth}\left(e^{\frac{\phi}2}\right)+\frac{2\sqrt{\varepsilon}}{c_\eta^2}+O(\varepsilon),\quad 0<\varepsilon\ll1.
\end{equation}
The errors~\eqref{eq:E1} and~\eqref{eq:E2} contribute to the the approximation error in~\eqref{eq:I_phi_bigger_than_eta}.
Expansion of~\eqref{eq:E1} and~\eqref{eq:E2} for~$0<\varepsilon\ll1$, shows that 
\[
|E_1(\eta)+E_2(\phi)|=O(\varepsilon|\log\varepsilon|).
\]
Additionally, since~$I(\phi)$ is independent on~$\eta$, the third term in~\eqref{eq:I_phi_bigger_than_eta} also contributes to the error
\[
E(\phi)=O\left(E_1(\eta)+E_2(\phi),\sqrt{\varepsilon}\right)=O\left(\sqrt{\varepsilon}\right).
\]
Finally, expression~\eqref{eq:LemmaForI} is attained by (arbitrarily) setting~$c_\eta=1$ and resolving
\[
I^{\rm approx}(\phi)=\begin{cases}
I_1^{\rm approx}(\phi)& 0\le \phi\le\varepsilon^{\frac34},\\
2\log\frac{4}{\varepsilon}-4{\rm arccoth}\left(e^{\frac{\phi}2}\right)&\varepsilon^{\frac34}<\phi,
\end{cases}
\]
where~$I_1^{\rm approx}(\phi)$ is given by~\eqref{eq:I1}
\end{proof}

\subsection{Numerical validation and a refined approximation}	
Proposition~\ref{prop:expansion} provides an approximation to~\eqref{eq:defOfI} with~$O\left(\sqrt{\varepsilon}\right)$ error.  \begin{figure}[h!]
\includegraphics[width=\textwidth]{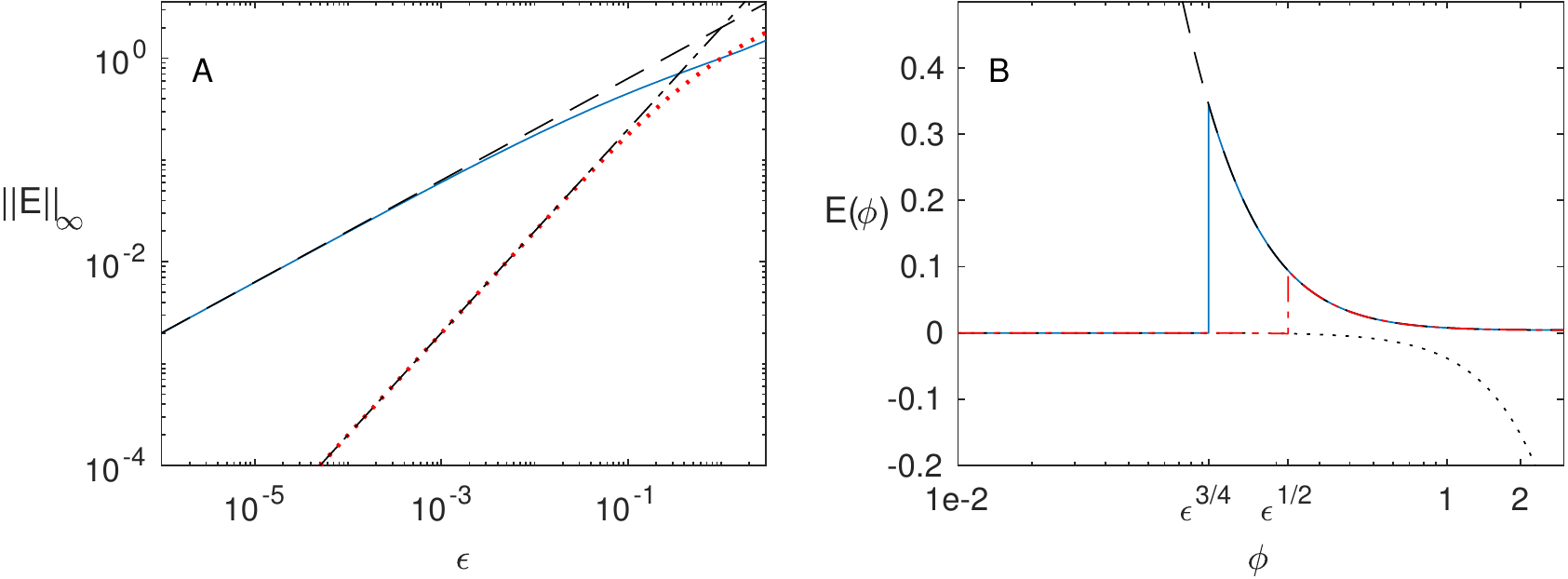}
\caption{A: Error~$\|E\|_\infty$ of approximation~\eqref{eq:LemmaForI} with~$\eta=\varepsilon^{3/4}$ as a function of~$\varepsilon$ ({\color{blue} solid}), the curve~$2\sqrt\varepsilon$ (dashes), and error~$\|E\|_\infty$ of approximation~\eqref{eq:LemmaForI} with~$\eta=\varepsilon^{1/2}$ as a function of~$\varepsilon$ ({\color{red} dots}) and the curve~$2\varepsilon$ (dash-dots).  B:  Error~$E(\phi;\varepsilon=0.05)$ of approximation~\eqref{eq:LemmaForI}  with~$\eta=\varepsilon^{3/4}$ ({\color{blue} solid}) and $\eta=\varepsilon^{1/2}$  ({\color{red} dash-dots}).  Additional curves are the error components~$I(\phi)-I_1(\phi)$ (dots) and~$I(\phi)-\left[2\log\frac{4}{\varepsilon}-4{\rm arccoth}\left(e^{\frac{\phi}2}\right)\right]$ (dashes), see~\eqref{eq:I1} and~\eqref{eq:I2}.}
\label{fig:crudeError}
\end{figure}
Figure~\ref{fig:crudeError}A shows that, as expected, the error magnitude is~$O\left(\sqrt{\varepsilon}\right)$.  Figure~\ref{fig:crudeError}B presents a profile of the error~$E(\phi)$ for~$\varepsilon=0.05$.  Consistent with the proof of Proposition~\ref{prop:expansion}, we observe that the error is maximal at the matching region.   Furthermore, Figure~\ref{fig:crudeError}B strongly suggests that the choice of the matching region~$\eta$ is not optimal, and rather one should consider a larger~$\eta$.  Indeed, 
the following proposition refines the choice of the matching region~$\eta$ in Proposition~\ref{prop:expansion} by considering higher-order correction terms.  
\begin{prop} \label{prop:expansion_refined}
	Let~$0<\varepsilon\ll 1$.  
	The error~$E(\phi)$ in the approximation~\eqref{eq:LemmaForI} with $\eta(\varepsilon)=\sqrt{\varepsilon}$ satisfies
	\[
	|E(\phi)|=\begin{cases}
	\frac{\varepsilon^2}2 \frac{\cosh\frac{\phi}2}{\sinh^2\frac{\phi}2}+O(\varepsilon^2\log\varepsilon), & \phi>\eta(\varepsilon) , \\
	-\frac{\phi^2}{24} + O(\varepsilon^2), & \phi\leq \eta(\varepsilon).
	\end{cases}
	\]
	In particular,~$|E(\phi)|=O(\varepsilon)$.
\end{prop}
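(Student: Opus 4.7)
The plan is to repeat the range-splitting argument of Proposition~\ref{prop:expansion} with the matching point $\eta=\sqrt\varepsilon$ in place of $\varepsilon^{3/4}$, while keeping one additional order in every Taylor expansion. The refinement produces cancellations at the matching point that improve the error from $O(\sqrt\varepsilon)$ to $O(\varepsilon)$ and expose the explicit leading term.

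In the inner region I would retain the next term $\sinh^2(x/2)=x^2/4+x^4/48+O(x^6)$ and expand the integrand as
\[
\frac{1}{\sqrt{x^2/4+\varepsilon^2}}\left(1-\frac{x^4/96}{x^2/4+\varepsilon^2}+O\!\left(\frac{x^6}{(x^2/4+\varepsilon^2)^2}\right)\right).
\]
The leading correction to $I_1^{\rm approx}(\phi)=2\,{\rm arcsinh}(\phi/(2\varepsilon))$ is therefore $-\int_0^\phi \tfrac{x^4/96}{(x^2/4+\varepsilon^2)^{3/2}}\,dx$. The substitution $x=2\varepsilon\tau$ reduces this to an elementary integral evaluated by $\int \tau^4(1+\tau^2)^{-3/2}d\tau=\tau(\tau^2+3)/(2\sqrt{1+\tau^2})-(3/2)\,{\rm arcsinh}\,\tau$. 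For the first branch ($\phi\le\eta$, where $I^{\rm approx}$ coincides with $I_1^{\rm approx}$), expanding the result in $T=\phi/(2\varepsilon)$ produces the leading term $-\phi^2/24$ together with an $O(\varepsilon^2)$ remainder uniform in $\phi\in[0,\sqrt\varepsilon]$.

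For $\phi>\eta$ I would use the refined outer expansion
\[
\bigl[\sinh^2(x/2)+\varepsilon^2\bigr]^{-1/2}=\frac{1}{\sinh(x/2)}\left[1-\frac{\varepsilon^2}{2\sinh^2(x/2)}+O\!\left(\frac{\varepsilon^4}{\sinh^4(x/2)}\right)\right].
\]
The leading correction is $-(\varepsilon^2/2)\int_\eta^\phi dx/\sinh^3(x/2)$, and the identity
\[
\frac{d}{dx}\frac{\cosh(x/2)}{\sinh^2(x/2)}=-\frac{1}{2\sinh(x/2)}-\frac{1}{\sinh^3(x/2)}
\]
reduces this to a combination of $\cosh(\phi/2)/\sinh^2(\phi/2)$, ${\rm arccoth}(e^{\phi/2})$, and boundary values at $x=\eta$. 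The $\phi$-dependent part gives the advertised leading term $(\varepsilon^2/2)\cosh(\phi/2)/\sinh^2(\phi/2)$, while the boundary value at $\eta=\sqrt\varepsilon$ contributes an explicit $O(\varepsilon)$ piece that must be matched below.

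The decisive step is verifying that every $O(\varepsilon)$ contribution cancels. Expanding $2\,{\rm arcsinh}(\eta/(2\varepsilon))$ and $4\,{\rm arccoth}(e^{\eta/2})$ at $\eta=\sqrt\varepsilon$ through order $\varepsilon$ yields
\[
I_1^{\rm approx}(\eta)+I_2^{\rm approx}(\phi)=I^{\rm approx}(\phi)+c_0\varepsilon+O(\varepsilon^2)
\]
for an explicit constant $c_0$. Combined with the $O(\varepsilon)$ contributions of $E_1$ (from the inner correction at $\eta$) and $E_2$ (from the outer correction's boundary term at $\eta$), these three pieces sum to zero precisely when $\eta=\sqrt\varepsilon$; any other exponent leaves a residual $O(\varepsilon^\gamma)$ term, which is the mechanism that selects $\eta(\varepsilon)=\sqrt\varepsilon$ as optimal. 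The main obstacle will be the careful bookkeeping of these four separate $O(\varepsilon)$ contributions—a miscalculation in any one of them would leave a spurious, uncancelled $O(\varepsilon)$ residual and contradict the improved rate—together with verifying that only an $O(\varepsilon^2\log\varepsilon)$ remainder survives after expanding $\cosh(\eta/2)/\sinh^2(\eta/2)$ and ${\rm arccoth}(e^{\eta/2})$ one order beyond their leading behavior.
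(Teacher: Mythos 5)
Your plan is essentially the paper's own proof (Appendix~\ref{app:lem_expansion_refined}): split at $\eta=\sqrt\varepsilon$, keep one more term in the inner expansion of $\sinh^2(x/2)$ and in the outer expansion in powers of $\varepsilon^2/\sinh^2(x/2)$, and integrate each correction explicitly; your antiderivatives are correct and reproduce the paper's intermediate expressions, so the computation goes through.

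One conceptual point is off, though it does not break the proof. The cancellation of the $O(\varepsilon)$ pieces at the matching point is \emph{not} what selects $\eta=\sqrt\varepsilon$: since $I(\phi)=I_1(\eta)+I_2(\phi)$ is independent of where you split, the mismatch $c_0\varepsilon=2\varepsilon^2/\eta^2+\eta^2/24+\cdots$ between $I_1^{\rm approx}(\eta)+I_2^{\rm approx}(\phi)$ and $I^{\rm approx}(\phi)$ cancels against the inner correction $-\eta^2/24$ and the outer boundary term $-2\varepsilon^2/\eta^2$ for \emph{every} $\eta$ in the overlap region $\varepsilon\ll\eta\ll1$, not only for $\eta=\sqrt\varepsilon$. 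What actually selects $\eta=\sqrt\varepsilon$ is that $\eta$ is also the branch point of the piecewise formula~\eqref{eq:LemmaForI}: just below it the error of the inner branch is $\sim\eta^2/24$, just above it the error of the outer branch is $\sim2\varepsilon^2/\eta^2$, and balancing these two gives $\eta\sim\sqrt\varepsilon$ and the $O(\varepsilon)$ bound (this is also what Figure~\ref{fig:crudeError}B illustrates). A last nitpick: the remainder on the inner branch is $O(\varepsilon^2\log\varepsilon)$ rather than $O(\varepsilon^2)$ uniformly up to $\phi=\sqrt\varepsilon$, because the retained correction contains a term $\tfrac{\varepsilon^2}{2}\,{\rm arcsinh}(\phi/(2\varepsilon))$; this imprecision is shared by the paper and is immaterial to the $O(\varepsilon)$ conclusion.
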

\begin{proof}
The proof extends the analysis involved in the proof of Proposition~\ref{prop:expansion} by considering the higher order terms of the relevant expansions, and by refining the matching region.  See Appendix~\ref{app:lem_expansion_refined} for details.
\end{proof}
Figure~\ref{fig:crudeError} shows that, as expected, the choice of~$\eta=\sqrt\varepsilon$, rather than~$\eta=\varepsilon^{3/4}$, leads to a much smaller error, with magnitude~$O(\varepsilon)$.
In retrospect, the analysis leading to Proposition~\ref{prop:expansion} led to a non-optimal choice of~$\eta$ due to a restriction~$\eta\gg\sqrt\varepsilon$ required to ensure the approximation~\eqref{eq:approxI0_sinh} is accurate.  Namely, to ensure that the~$O(x^4)$ term in~\eqref{eq:approxI0_sinh} is significantly smaller than each of the other terms.  Considering the explicit error term,~see~\eqref{eq:explicitOx4}, rather than just its~$O(x^4)$ magnitude, enables relaxing the restriction~$\eta\gg\sqrt\varepsilon$.  Removing this restriction allowed improving  approximation~\eqref{eq:LemmaForI} merely by modifying~$\eta$, rather than also accounting for high-order corrections.
\subsection{Sole dependence on problem parameters $L$ and $V$}\label{sec:aAndE}
Proposition~\ref{prop:expansion_refined} can be applied to approximate the inverse steady-state solution $x(\phi)$ of the PNP system~\eqref{eq:PNP}.  To do so, let us first consider the relation between~$\varepsilon$ and~$\alpha$ to the natural problem parameters $L$ and $V$.  
In what follows, we restrict our attention to the case $V=O(1)$ as~$\varepsilon\to0$. 
\begin{prop} \label{prop:relationAlpha}
Let~$V\ne0$, and ~$0<\varepsilon\ll1$. In addition,  let~$x(\phi)$ and~$\alpha$ be defined by \eqref{eq:intRepresentation2}. Then,
\begin{equation} \label{eq:approxForSqrta}
\sqrt{\alpha} = \sqrt{1+\left[\frac{4\sinh^2(V/4)}{L}\right]^2}-\frac{4\sinh^2(V/4)}{L}+ O\left( \varepsilon\right),
\end{equation}
and
\begin{equation} \label{eq:firstRelation}
\varepsilon=4\tanh\left(\frac{V}4\right)e^{-L\sqrt{\alpha}/2 } \left( 1+O(\varepsilon) \right).
\end{equation}	
\end{prop}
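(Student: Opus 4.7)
The plan is to derive two independent relations among $L$, $V$, $\sqrt{\alpha}$ and $\varepsilon$ from the integral representation~\eqref{eq:intRepresentation2}, and then to invert them using the refined expansion of Proposition~\ref{prop:expansion_refined}. The first relation is immediate: substituting $\phi=V$ into~\eqref{eq:intRepresentation2} and using $x(V)=L/2$ gives the exact identity $L\sqrt{\alpha}=I(V;\varepsilon)$. Since $V=O(1)>\eta(\varepsilon)$, Proposition~\ref{prop:expansion_refined} applies and, after rewriting $4\,\mathrm{arccoth}(e^{V/2})=-2\log\tanh(V/4)$, this reads $L\sqrt{\alpha}=2\log(4\tanh(V/4)/\varepsilon)+O(\varepsilon)$. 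Exponentiating and solving for $\varepsilon$ yields~\eqref{eq:firstRelation} directly.

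To obtain~\eqref{eq:approxForSqrta} I need a second, independent equation, which will come from the definition of $\alpha$. By Lemma~\ref{lemma:LemmaOnCCPB}, $1/\alpha=\frac{1}{L}\int_{-L/2}^{L/2}\cosh\phi\,dx$. I will change variables from $x$ to $\phi$ using $dx=d\phi/\phi_x$ with $\phi_x=2\sqrt{\alpha}\sqrt{\sinh^{2}(\phi/2)+\varepsilon^{2}}$ (as derived in Section~\ref{sec:formulation}) and split $\cosh\phi=1+2\sinh^{2}(\phi/2)$, which produces
\[
\frac{L}{\sqrt{\alpha}}=I(V;\varepsilon)+2J(V;\varepsilon),\qquad J(V;\varepsilon):=\int_{0}^{V}\frac{\sinh^{2}(\phi/2)}{\sqrt{\sinh^{2}(\phi/2)+\varepsilon^{2}}}\,d\phi.
\]
Subtracting this from $L\sqrt{\alpha}=I(V;\varepsilon)$ eliminates the $I(V;\varepsilon)$ term and leaves the clean balance $L(1/\sqrt{\alpha}-\sqrt{\alpha})=2J(V;\varepsilon)$.

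The remaining work is to evaluate $J$ with enough precision. I will exploit the algebraic identity $u^{2}/\sqrt{u^{2}+\varepsilon^{2}}=\sqrt{u^{2}+\varepsilon^{2}}-\varepsilon^{2}/\sqrt{u^{2}+\varepsilon^{2}}$ with $u=\sinh(\phi/2)$, giving
\[
J(V;\varepsilon)=\int_{0}^{V}\sqrt{\sinh^{2}(\phi/2)+\varepsilon^{2}}\,d\phi-\varepsilon^{2}\,I(V;\varepsilon).
\]
The second term is $O(\varepsilon^{2}|\log\varepsilon|)$ by Proposition~\ref{prop:expansion_refined}. For the first term, a short comparison with the explicit antiderivative $\int_{0}^{V}\sinh(\phi/2)\,d\phi=4\sinh^{2}(V/4)$ (splitting the interval at scale $\phi\sim\varepsilon$ to handle the near-origin contribution, where $\sqrt{u^{2}+\varepsilon^{2}}-u=O(\varepsilon^{2}/u)$ away from the origin and $O(\varepsilon)$ near it) shows that the difference is also $O(\varepsilon^{2}|\log\varepsilon|)$. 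Thus $2J(V;\varepsilon)=8\sinh^{2}(V/4)+O(\varepsilon^{2}|\log\varepsilon|)$.

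Substituting this into $L(1/\sqrt{\alpha}-\sqrt{\alpha})=2J$ and multiplying through by $\sqrt{\alpha}$ produces a quadratic in $\sqrt{\alpha}$,
\[
L\,\alpha+8\sinh^{2}(V/4)\,\sqrt{\alpha}-L = O(\varepsilon^{2}|\log\varepsilon|),
\]
whose positive root, after dividing by $L$, is precisely the right-hand side of~\eqref{eq:approxForSqrta}; the residual error is comfortably absorbed into the stated $O(\varepsilon)$ (and is in fact smaller, since~\eqref{eq:firstRelation} forces $L\sim\log(1/\varepsilon)$). I expect the main technical obstacle to be the careful estimate of $\int_{0}^{V}[\sqrt{\sinh^{2}(\phi/2)+\varepsilon^{2}}-\sinh(\phi/2)]\,d\phi$ near $\phi=0$, where the integrand is not uniformly small in $\varepsilon$; everything else is an algebraic rearrangement combined with Proposition~\ref{prop:expansion_refined}.
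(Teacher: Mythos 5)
Your proof is correct, and its overall architecture coincides with the paper's: both derive the exact pair of relations $L\sqrt{\alpha}=I(V;\varepsilon)$ (from $x(V)=L/2$) and $L/\sqrt{\alpha}=\int_0^V\cosh\phi\,\bigl(\sinh^2(\phi/2)+\varepsilon^2\bigr)^{-1/2}d\phi$ (from the definition of $\alpha$ after the change of variables $x\mapsto\phi$), approximate the latter integral by $I(V;\varepsilon)+8\sinh^2(V/4)$ up to small errors, and then solve the resulting quadratic in $\sqrt{\alpha}$; the derivation of \eqref{eq:firstRelation} by exponentiating $L\sqrt{\alpha}=2\log(4\tanh(V/4)/\varepsilon)+O(\varepsilon)$ is identical in both. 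Where you genuinely diverge is in the treatment of the $\cosh$ integral: the paper repeats the range-splitting/matching argument of Proposition~\ref{prop:expansion_refined} on the new integrand (splitting at $\phi=\sqrt{\varepsilon}$ and comparing $I_1^*,I_2^*$ with $I_1,I_2$), whereas you decompose the integrand exactly via $\cosh\phi=1+2\sinh^2(\phi/2)$ and $u^2/\sqrt{u^2+\varepsilon^2}=\sqrt{u^2+\varepsilon^2}-\varepsilon^2/\sqrt{u^2+\varepsilon^2}$. This isolates the singular part as precisely $I(V;\varepsilon)$ — which then cancels against $L\sqrt{\alpha}$ to give the clean exact identity $L(1/\sqrt{\alpha}-\sqrt{\alpha})=2J(V;\varepsilon)$ — and leaves only the regular integral $J$, estimated by elementary comparison with $\int_0^V\sinh(\phi/2)\,d\phi=4\sinh^2(V/4)$. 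Your route avoids a second matched-asymptotics computation, makes the cancellation structure transparent, and yields the sharper error $O(\varepsilon^2|\log\varepsilon|)$ for that step (the paper states only $O(\varepsilon)$); the paper's route has the virtue of reusing its already-established machinery verbatim. Both land on the same quadratic $L\alpha+8\sinh^2(V/4)\sqrt{\alpha}-L=o(1)$ and hence on \eqref{eq:approxForSqrta}.
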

\begin{proof}
The definition of $\alpha$, see \eqref{eq:intRepresentation2}, implies that
\[
\alpha=\left( \frac{1}{L} \int_{-V}^{V} e^\phi \frac{dx}{d\phi} d\phi \right)^{-1}=L\left( \frac1{2\sqrt\alpha}\int_0^V \frac{e^{\phi}+e^{-\phi}}{\sqrt{\sinh^2 \left( \frac{\phi}{2}\right)+\varepsilon^2}} d\phi\right)^{-1},
\]
hence
\begin{equation} \label{eq:secondRelation}
\sqrt{\alpha} = L\left( \int_0^V \frac{\cosh(\phi)}{\sqrt{\sinh^2 \left( \frac{\phi}{2}\right)+\varepsilon^2}} d\phi\right)^{-1}.
\end{equation}
The integral in~\eqref{eq:secondRelation} is approximated using range splitting and asymptotic matching, similar to the proof of Proposition \ref{prop:expansion_refined}.
Particularly, 
\begin{equation}\begin{split}
\int_0^V \frac{\cosh(\phi)d\phi}{\sqrt{\sinh^2 \left( \frac{\phi}{2}\right)+\varepsilon^2}} &=\underbrace{\int_0^{\sqrt\varepsilon} \frac{\cosh(\phi)d\phi}{\sqrt{\sinh^2 \left( \frac{\phi}{2}\right)+\varepsilon^2}}}_{I_1^*} +\underbrace{\int_{\sqrt\varepsilon}^V \frac{\cosh(\phi)d\phi}{\sqrt{\sinh^2 \left( \frac{\phi}{2}\right)+\varepsilon^2}}}_{I_2^*}.
\end{split}
\end{equation}
The integral~$I_1^*$ can be approximated by the integral~$I_1$~\eqref{eq:I0I1}.  Indeed, the Mean Value Theorem for integrals implies that
\[
\int_0^{\sqrt\varepsilon} \frac{\cosh(\phi)d\phi}{\sqrt{\sinh^2 \left( \frac{\phi}{2}\right)+\varepsilon^2}}=\cosh(\xi)\int_0^{\sqrt\varepsilon} \frac{d\phi}{\sqrt{\sinh^2 \left( \frac{\phi}{2}\right)+\varepsilon^2}},\quad 0<\xi<\sqrt{\varepsilon}.
\]
Since,~$\cosh(\xi)<1+\varepsilon$ for~$0<\xi<\sqrt{\varepsilon}$ and for small enough~$\varepsilon$,
\[
I_1^*=(1+O(\varepsilon))I_1(\sqrt\varepsilon;\varepsilon).
\]
Similarly, using the same argument as in the approximation of~\eqref{eq:I0I1} in Proposition \ref{prop:expansion_refined},
\begin{equation*}\begin{split}
I_2^*&=I_2(V;\varepsilon)-\int_{\sqrt\varepsilon}^V \frac{1-\cosh(\phi)}{\sinh(\phi/2)}d\phi+O(\varepsilon)\\&=I_2(V;\varepsilon)+4\left[\cosh(V/2)-\cosh(\sqrt\varepsilon/2)\right]+O(\varepsilon).
\end{split}
\end{equation*}
Therefore,
\begin{equation}\label{eq:Istar}\begin{split}
\int_0^V \frac{\cosh(\phi)d\phi}{\sqrt{\sinh^2 \left( \frac{\phi}{2}\right)+\varepsilon^2}}&=I^{\rm approx}(V;\varepsilon)+4\left[\cosh(V/2)-\cosh(\sqrt\varepsilon/2)\right]+O(\varepsilon)\\&=I^{\rm approx}(V;\varepsilon)+8\sinh^2(V/4)+O(\varepsilon),
\end{split}
\end{equation}
Substituting~\eqref{eq:Istar} into~\eqref{eq:secondRelation} gives rise to~\eqref{eq:approxForSqrta}. 

By~\eqref{eq:intRepresentation2},
\begin{equation}\label{eq:raw_eps}
2\sqrt{\alpha}x(V)=\sqrt{\alpha}L=I(V;\varepsilon)=2\log\frac{4}{\varepsilon}-4\,{\rm arccoth}\left(e^{\frac{V}2}\right)+O(\varepsilon).
\end{equation}
Isolating~$\varepsilon$ in~\eqref{eq:raw_eps} gives rise to~\eqref{eq:firstRelation}. 
\end{proof}
Propositions~\ref{prop:expansion_refined} and~\ref{prop:relationAlpha} can be readily applied to approximate the inverse steady-state solution $x(\phi)$ of PNP~\eqref{eq:PNP},
\begin{lemma} \label{lem:explicitXOfPhi}
	Let $L\gg 1,~V\ne0$, and let~$\phi(x;L,V)$ be a steady-state solution of the PNP system~\eqref{eq:PNP}. Then, the inverse function~$x(\phi)=\phi^{-1}(x)$ satisfies 
		\begin{subequations}\label{eq:xapprox}
	\begin{equation}\label{eq:xapprox_x}
	x(\phi)=x^{\rm approx}(\phi)+c\,e^{-L/2},\quad c=2\tanh(V/4)\exp\!\left[2\sinh^2(V/4)\right]+O(1/L)
	\end{equation}
	where
		\begin{equation}\label{eq:approx_xphi}
		x^{\rm approx}(\phi)=\begin{cases}
		\frac{1}{\sqrt{\tilde \alpha}}{\rm arcsinh}\left(\frac{\phi}{2 \tilde\varepsilon}\right), & 0\le \phi \leq \sqrt{\tilde\varepsilon},\\
		\frac{1}{\sqrt{ \tilde\alpha}}\left[\log\frac{4}{ \tilde\varepsilon}-2\,{\rm arccoth}\left(e^{\frac{\phi}2}\right)\right], & \phi > \sqrt{\tilde\varepsilon},
		\end{cases} 
		\end{equation}
		\begin{equation} \label{eq:sqrtaAnalytic}
		\sqrt{\tilde\alpha} =  \sqrt{1+\left[\frac{4\sinh^2(V/4)}{L}\right]^2}-\frac{4\sinh^2(V/4)}{L},\quad \tilde\varepsilon=4e^{-L\sqrt{\tilde\alpha}/2 }\, \tanh\left(\frac{V}4\right).
		\end{equation}
\end{subequations}
\end{lemma}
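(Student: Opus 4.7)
The plan is to combine Propositions~\ref{prop:expansion_refined} and~\ref{prop:relationAlpha} through the representation $x(\phi) = I(\phi;\varepsilon)/(2\sqrt{\alpha})$ from~\eqref{eq:intRepresentation2}, and to convert the resulting $O(\varepsilon)$ error into the explicit exponentially small form $c\,e^{-L/2}$ with the stated prefactor.

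The first step is to apply Proposition~\ref{prop:expansion_refined} to obtain
$$
x(\phi) = \frac{I^{\rm approx}(\phi;\varepsilon)}{2\sqrt{\alpha}} + \frac{E(\phi)}{2\sqrt{\alpha}}, \qquad |E(\phi)| = O(\varepsilon),
$$
and to observe that $x^{\rm approx}(\phi)$ defined in~\eqref{eq:approx_xphi} coincides exactly with $I^{\rm approx}(\phi;\tilde\varepsilon)/(2\sqrt{\tilde\alpha})$. Thus $x(\phi) - x^{\rm approx}(\phi)$ decomposes into the intrinsic approximation error $E(\phi)/(2\sqrt{\alpha})$ plus the discrepancies introduced by replacing the implicit parameters $(\alpha,\varepsilon)$ by the explicit $(\tilde\alpha,\tilde\varepsilon)$ both in the prefactor $1/(2\sqrt{\alpha})$ and in $I^{\rm approx}$ itself.

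For the parameter replacement, Proposition~\ref{prop:relationAlpha} gives $\sqrt{\alpha} = \sqrt{\tilde\alpha} + O(\varepsilon)$ directly, and implies $\varepsilon = \tilde\varepsilon\bigl(1 + O(\varepsilon)\bigr)$ once the $L$-dependence in $e^{-L\sqrt{\alpha}/2}$ is unfolded against $\sqrt{\tilde\alpha}$. The sensitivity of~$I^{\rm approx}$ to~$\varepsilon$ enters only through $2\log(4/\varepsilon)$ on the outer branch and through ${\rm arcsinh}(\phi/(2\varepsilon))$ on the inner branch, and in both cases the substitution $\varepsilon\to\tilde\varepsilon$ changes $I^{\rm approx}$ by $O(\varepsilon)$. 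Since $\sqrt{\tilde\alpha} = 1 + O(1/L)$ is bounded away from zero for $L\gg 1$, all contributions combine to $x(\phi) - x^{\rm approx}(\phi) = O(\varepsilon)$. The explicit formula for~$\tilde\varepsilon$ in~\eqref{eq:sqrtaAnalytic} together with the expansion $\sqrt{\tilde\alpha} = 1 - 4\sinh^2(V/4)/L + O(1/L^2)$ then yields
$$
\tilde\varepsilon = 4\tanh(V/4)\exp\!\bigl[2\sinh^2(V/4)\bigr] e^{-L/2}\bigl(1 + O(1/L)\bigr),
$$
and tracing the leading coefficient of~$\tilde\varepsilon$ through the $1/(2\sqrt{\tilde\alpha})$ prefactor produces exactly $c = 2\tanh(V/4)\exp[2\sinh^2(V/4)] + O(1/L)$, matching~\eqref{eq:xapprox_x}.

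The main obstacle will be the error bookkeeping. One must verify that no hidden~$L$-dependence slips into the error amplification: the derivative of $\log(1/\varepsilon)$ with respect to~$\varepsilon$ is $1/\varepsilon = O(e^{L/2})$, which threatens to amplify the $\sqrt{\alpha} - \sqrt{\tilde\alpha} = O(\varepsilon)$ correction beyond the claimed exponential scale. Resolving this requires showing that the logarithmic sensitivity is already accounted for by the structural form of Proposition~\ref{prop:relationAlpha} (so that~$\varepsilon/\tilde\varepsilon = 1 + o(1)$ even after exponentiating the~$O(\varepsilon)$ error in~$\sqrt{\alpha}$), and that the leading prefactor of~$e^{-L/2}$ arises from a single dominant error contribution with the remaining substitutions relegated to the~$O(1/L)$ remainder absorbed into~$c$.
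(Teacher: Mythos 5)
Your proposal is correct and follows essentially the same route as the paper: apply Proposition~\ref{prop:expansion_refined} to the representation $x(\phi)=I(\phi;\varepsilon)/(2\sqrt{\alpha})$ from~\eqref{eq:intRepresentation2}, then substitute the explicit approximations $(\tilde\alpha,\tilde\varepsilon)$ for $(\alpha,\varepsilon)$ from Proposition~\ref{prop:relationAlpha} and absorb the resulting $O(\varepsilon)$ discrepancies into the error term. Your additional observation that the multiplicative form $\varepsilon=\tilde\varepsilon(1+O(\varepsilon))$ neutralizes the $1/\varepsilon$ sensitivity of the logarithmic term is a correct and worthwhile piece of bookkeeping that the paper leaves implicit under ``neglecting $O(\varepsilon)$ terms.''
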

\begin{proof}
Direct application of Proposition \ref{prop:expansion_refined} to approximate~$I(\phi;\varepsilon)$ in relation \eqref{eq:intRepresentation2} yields
\[
x(\phi)=x^{\rm approx}(\phi)+O(\varepsilon/\sqrt{\alpha}),
\]
where
\begin{equation}\label{eq:approx_xphi_raw}
		x^{\rm approx}(\phi)=\begin{cases}
		\frac{1}{\sqrt{ \alpha}}{\rm arcsinh}\left(\frac{\phi}{2 \varepsilon}\right), & 0\le \phi \leq \sqrt{\varepsilon},\\
		\frac{1}{\sqrt{ \alpha}}\left[\log\frac{4}{ \varepsilon}-2\,{\rm arccoth}\left(e^{\frac{\phi}2}\right)\right], & \phi > \sqrt{\varepsilon}.
\end{cases} 
\end{equation}
Using approximations~\eqref{eq:approxForSqrta} and~\eqref{eq:firstRelation} for~$\alpha$ and~$\varepsilon$ in~\eqref{eq:approx_xphi_raw}, and neglecting~$O(\varepsilon)$ terms, yields~\eqref{eq:xapprox}. 
\end{proof}
\subsection{Numerical study} \label{sec:numericValid}
In this section, we present numerical simulations of the CCPB equation~\eqref{eq:CCPB} for the steady-state inverse solution $x(\phi)$ of the PNP system~\eqref{eq:PNP}.
 Throughout this section we use the numerical scheme described in~\cite{lee2010new}, unless otherwise stated.  

Figure~\ref{fig:phiProfiles}A presents a comparison between the inverse solution~$x(\phi)$ of~\eqref{eq:CCPB} ({\color{red} dashes}) and the corresponding approximate solution~$x^{\rm approx}(\phi)$, see~\eqref{eq:xapprox}, ({\color{blue}solid}) for~$V=5$ and for~$L=9$.  We observe that the approximation error is maximal around the point where~$\phi=\eta=\sqrt\varepsilon$, showing that as expected, the dominant source of error in the approximation of~$x(\phi)$ is the approximation error in~$I(\phi;\varepsilon)$, compare also with Figure~\ref{fig:crudeError}B.  
Figure~\ref{fig:phiProfiles}B presents the same data as Figure~\ref{fig:phiProfiles}A, but for~$L=15$.  In this case, the approximation error~$\|x(\phi)-x^{\rm approx}(\phi)\|_\infty\approx0.009$ (two curves are indistinguishable).   Finally, Figure~\ref{fig:phiProfiles}C presents the approximation error~$\|x(\phi)-x^{\rm approx}(\phi)\|_\infty$ for~$5< L< 40$ and shows that it agrees well with the predicted error~\eqref{eq:xapprox_x}. 
\begin{figure}[!h]
	\centering
	\includegraphics[width=\textwidth]{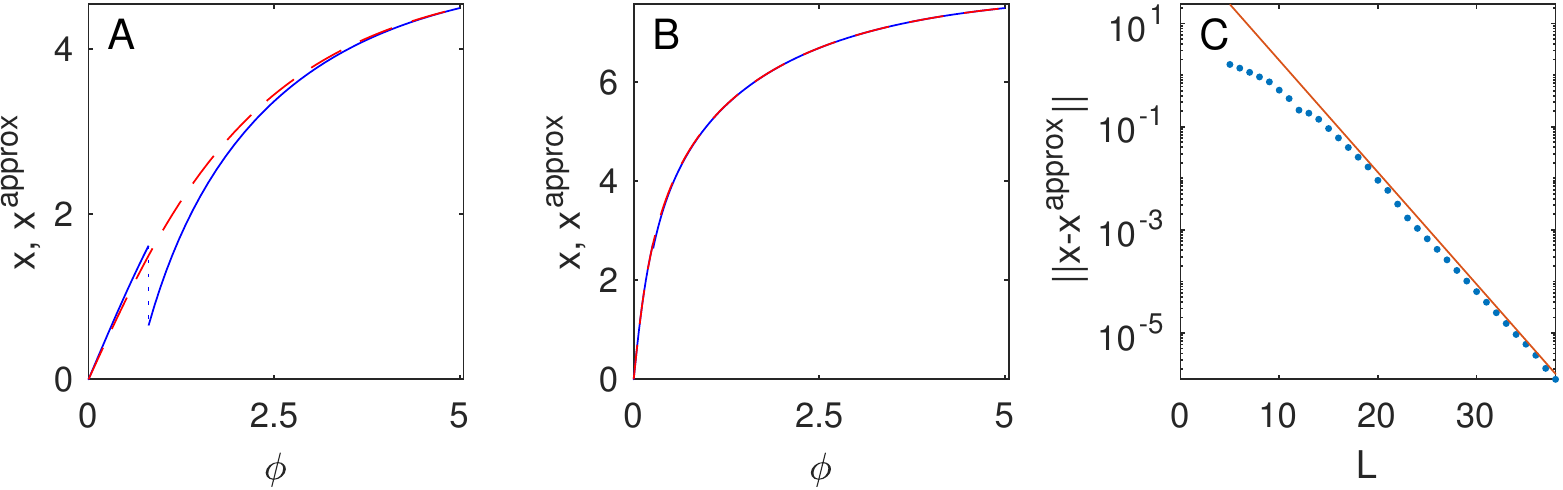}
	\caption{Comparison of the inverse solution~$x(\phi)$ of~\eqref{eq:CCPB} ({\color{red}dashes})  
and corresponding approximation~$x^{\rm approx}(\phi)$~\eqref{eq:xapprox} ({\color{blue}solid}) for $V=5$, and A: $L=9$, B: $L=15$.  Graph C presents the approximation error~$\|x(\phi)-x^{\rm approx}(\phi)\|_\infty$ as a function of~$L$ ({\color{blue}\tiny{$\bullet$}}) and predicted error~\eqref{eq:xapprox_x} ({\color{red} solid}).
}
	\label{fig:phiProfiles}
\end{figure}

\section{Finite domain effects}\label{sec:finiteDomainEffects}

Lemma~\ref{lem:explicitXOfPhi} provides an approximation of the inverse steady-state solution~$x(\phi)$ of the PNP equations~\eqref{eq:PNP} in a finite domain.\footnote{The analysis considers the solution behavior as a function of (rescaled) domain size~$L$ and (rescaled) applied voltage~$V$, while  the dependence upon average ion concentration~$\bar{c}$ is absorbed in the rescaling.  Accordingly, we restrict the interpretation of the non-dimensional results to the case when the average ion concentration is fixed and~$L$ or~$V$ vary.  Other case can be readily studied by considering dimensional variables or other scalings.}  Accordingly, it may be used to reveal when finite domain effects are significant and to quantify their nature.
\subsection{Distinct solution behaviors}\label{sec:distinct}
The approximation error~$E$ in~\eqref{eq:xapprox_x} is exponentially decreasing in~$L$,~$E=c e^{-L/2}$, but the coefficient~$c$ is exponential in~$V$, see~\eqref{eq:xapprox_x},
\begin{equation}\label{eq:error}
E\approx 2\tanh(V/4)e^{2\sinh^2(V/4)-L/2}.
\end{equation}
This error is proportional to~$\phi_x(0)$, see~\eqref{eq:sqrtaAnalytic} and~\eqref{eq:eps_def}.  
The point~$x=0$ is in the middle of the domain, farthest from the boundaries, i.e., at the bulk of the electrolyte solution.  Due to symmetry~$\phi(0)=0$.  If, additionally~$\phi_x(0)\ll1$, the solution can be regarded as electroneutral at the bulk.  When
\begin{equation}\label{eq:confinedDomain}
L\le 4\sinh^2(V/4)
\end{equation}
then~\eqref{eq:error} implies that~$E=O(1)$, and hence~$\phi_x(0)=O(1)$.  Thus, this case corresponds to solutions which do not reach electroneutrality at the bulk, see Figure~\ref{fig:illustrateProfiles}A.  At low voltages (which are at the focus of the classical theory of electrolytes), solutions reach bulk electroneutrality, unless they are in confined domains.   Accordingly, we refer to the parameter regime~\eqref{eq:confinedDomain} as the region corresponding to {\em confined domains}, but note that the domains may be relatively large when~$V$ is large.\begin{figure}[!h]
	\centering
	\includegraphics[width=0.9\textwidth]{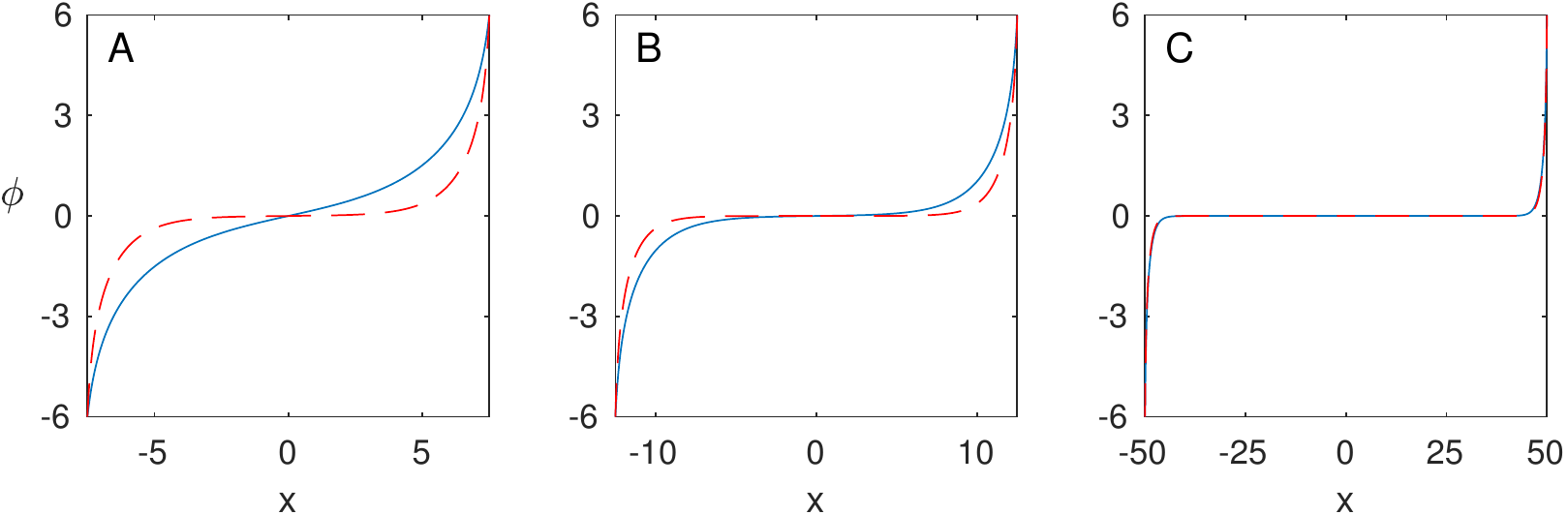}
	\caption{Steady-state solution~$\phi(x)$ of PNP~\eqref{eq:PNP} for~$V=6$ and numerous domain sizes~$L$ ({\color{blue} solid}), superimposed with the PNP solution~$\phi(x;V=6,L=\infty)$ in the domains~$[-L/2,0]$ and~$[0,L/2]$ ({\color{red} dashes}) for A:~$L=15$, B:~$L=25$ and~C:~$L=100$ (two curves are indistinguishable).}\label{fig:illustrateProfiles}
\end{figure}

For larger domain sizes the error~$E\ll1$, and also~$\phi_x(0)\ll1$.  In this case, the solution is nearly electroneutral at the bulk, see Figure~\ref{fig:illustrateProfiles}B and Figure~\ref{fig:illustrateProfiles}C.  In this region, the approximation~\eqref{eq:xapprox_x} is accurate and allows to quantify the effect of a finite domain.  Particularly, 
\begin{equation}
\begin{split}
x^{\rm approx}(\phi;V,L)&-x^{\rm approx}(\phi;V,L=\infty)=\\&\ln\left(4\frac{\cosh(V/4)}{\cosh(\phi/4)}\frac{\sinh(\phi/4)}{\sinh(V/4)}\right)\frac{4\sinh^2\!\left(\frac{V}4\right)}L+O\left(\frac1{L^2}\right).
\end{split}
\end{equation}
Therefore, finite domain effects are negligible when
\begin{equation}\label{eq:essentially_infinite}
\frac{4\sinh^2\!\left(\frac{V}4\right)}L\ll1.
\end{equation}
In this case, the steady-state solution of PNP~\eqref{eq:PNP} is well approximated by the solution of the Poisson-Boltzmann equation~\eqref{eq:PB}, see Figure~\ref{fig:illustrateProfiles}C.

Overall, we identify three parameter regimes, as depicted in Figure~\ref{fig:diagram},
\begin{itemize}
\item Region A: Corresponding to confined domains in which the solutions do not reach electro-neutrality at the bulk.  This region resides in the regime~\eqref{eq:confinedDomain}. 
\item Region B: Corresponding to large domains in which the solutions reach electro-neutrality at the bulk, but finite-domain effects are significant near the boundaries. This region resides in the regime~$E\ll1$ and~$L=O(4\sinh^2(V/4))$.
\item Region C: Corresponding to large enough domains so that finite-domain effects are negligible.  
This region resides in the regime~\eqref{eq:essentially_infinite}.
\end{itemize}

\begin{figure}[!h]
	\centering
	\includegraphics[width=0.4\textwidth]{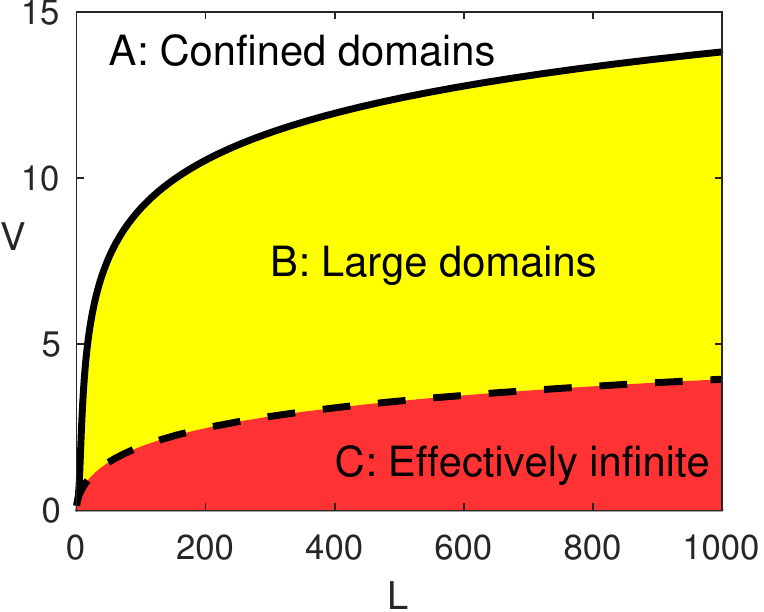}
	\caption{Graphs~$E={\rm tol}$ (solid) and~$\frac{4\sinh^2\!\left(\frac{V}4\right)}L={\rm tol}$ (dashed) for~${\rm tol}=0.05$.  These curves divide the parameter space into three distinct regions: Region $A$ corresponding to confined domains, see~\eqref{eq:confinedDomain}, region $C$ corresponding to large enough domains so that finite-domain effects are negligible, see~\eqref{eq:essentially_infinite}, and the intermediate region~$B$ in which finite-domain effects are significant near the boundaries. 
	}\label{fig:diagram}
\end{figure}

\subsection{Screening length} \label{sec:screening}
The screening length is an important measure of a charged boundary (or charged carrier) net electrostatic effect in an electrolyte solution.  When considering low charge on the boundary~$(V\ll1)$, it is commonly referred to as the Debye or Debye--H\"uckel length.  The screening length is defined as the distance from the electrode at which the electric potential decreases in magnitude by a factor of $\frac{1}{e}$.  Accordingly, the screening length~$\lambda_s$ predicted by the PNP model~\eqref{eq:PNP} is
\[
\lambda_s(L,V):=x(\phi=V;L,V)-x\left(\phi=\frac{V}{e};L,V\right),
\]
where~$x(\phi;L,V)$ is the inverse of the steady-state solution of the PNP system~\eqref{eq:PNP}.
Let us consider the ratio between the screening length~$\lambda_s(L,V)$ in a finite domain and~$\lambda_s(L=\infty,V)$ in a infinite domain
\[
\frac{\lambda_s(L,V)}{\lambda_s(L=\infty,V)}=\frac1{\sqrt{\alpha}}\frac{I(V;\varepsilon(L,V))-I(V/e;\varepsilon(L,V))}{I(V;\varepsilon(L=\infty,V))-I(V/e;\varepsilon(L=\infty,V))}=\frac1{\sqrt{\alpha}}+O(\varepsilon),
\]
where the last equality because~$I(V_1;\varepsilon)-I(V_2;\varepsilon)$ is, to leading order, independent of~$\varepsilon$, see~\eqref{eq:LemmaForI}.
Since $\alpha<1$, see~\eqref{eq:approxForSqrta}, we obtain that for any finite~$L>0$,
$\lambda_s(L)>\lambda_s(\infty)$. Therefore, finite domain effects increase the screening length.
Intuitively, screening of a surface charge involve the redistribution of counter-ions from the bulk to the vicinity of the surface.  Indeed,  the (normalized) ionic concentration in the bulk is~$\approx\alpha$, see~\eqref{eq:Boltzmann1}.
As a result, the counter-ion concentration in the bulk decreases, leading in turn to an increase in the entropic energy of the bulk.  Therefore, screening of a surface charge involves an energetic cost which becomes more dominant in a small domain, implying that the screening efficiency decreases with domain size.

Figure~\ref{fig:ScreeningComparisonV10} presents the ratio~$1/\sqrt{\tilde \alpha}\approx \lambda_s(V=10,L)/\lambda_s(V=10,L=\infty)$, see~\eqref{eq:sqrtaAnalytic}, as a function of the domain size, $L$.  As expected, as $L$ increases, the ratio decreases, tending to $1$ as $L\to \infty$.   Figure~\ref{fig:ScreeningComparisonV10}A-C present the electric potential profiles corresponding to point A-C in the top graph of Figure~\ref{fig:ScreeningComparisonV10}.  Even when~$L=300$, there are observable differences  between~$\phi(x;V,L)$ and~$\phi(x;V,L=\infty)$.  Thus,  finite domain effects, in this case, persist even at $L=300$.

\begin{figure}[ht!]
	\centering
	\includegraphics[width=0.85\textwidth]{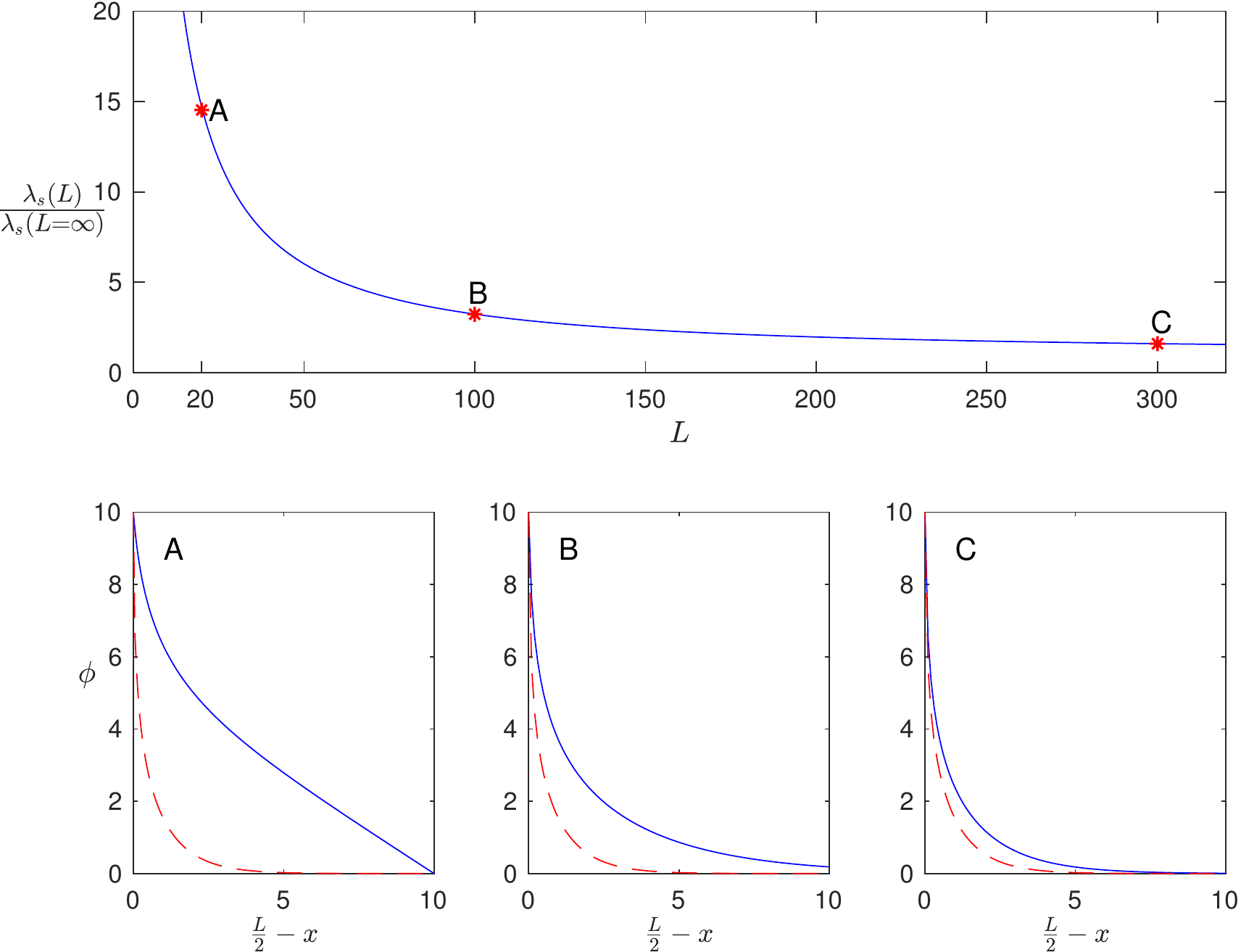}
	\caption{Top graph: The ratio~$\frac1{\sqrt{\tilde\alpha}}\approx\frac{\lambda_s(L,V=10)}{\lambda_s(L=\infty,V=10)}$ as a function of~$L$, where~$\lambda_s(L,V)$ is the screening length predicted by the PNP model~\eqref{eq:PNP} with given parameters~$L$ and~$V$.
Bottom graph present electric potential profiles~$\phi$ ({\color{blue}solid}) for $A:L=20$, $B:L=100$, and $C:L=300$.  {\color{red}Dashed} curve in the three bottom graphs is the PNP steady-state solution~$\phi(x;V=10,L=\infty)$.  All solutions are presented in the domain~$[L/2-10,L/2]$.}\label{fig:ScreeningComparisonV10}
\end{figure}

\section{Finite domain effects in generalized PNP models}	\label{sec:diffBC}
In this section, we apply the analysis of finite domain effects on steady-states of Poisson-Nernst-Planck equations to generalized PNP models.  In Section~\ref{sec:finiteDomainEffects}, different regimes of the parameter space were distinguished according to criteria arising from the detailed analysis of the PNP equations, see, e.g.,~\eqref{eq:essentially_infinite}.  We now suggest  alternative criteria that is not tailored to a specific PNP model, and therefore can be used to study finite domain effects in any generalized PNP equation.  

We identify three parameter regimes in which finite domain effects give rise to distinct solution behaviors, see Section~\ref{sec:finiteDomainEffects} and particularly Figure~\ref{fig:diagram}.  The first region corresponds to confined domains in which the solutions do not reach electro-neutrality at the bulk.  Consistent with the analysis of the PNP model, see Section~\ref{sec:distinct}, in a generalized PNP equation this region can be identified with~$\phi_x(0)=O(1)$.  The second region corresponds to large domains in which the solutions reach electro-neutrality at the bulk,~$|\phi_x(0)|\ll1$, but finite-domain effects are significant near the boundaries.  This region can be identified with the conditions
\[
|\phi_x(0)|\ll1,\quad |\phi(0;V,L)-\phi(0;V,L=\infty)|=O(1).
\]
Finally, finite-domain effects are considered negligible when~$|\phi(0;V,L)-\phi(0;V,L=\infty)|\ll1$.  These criteria are justified for any generalized PNP model that is not sensitive to changes in the bulk concentration.  

In what follows, we apply these criteria to study finite-domain effects in 
 the PNP-Stern model that accounts for a Stern layer of (normalized) width $\delta$, and is given by~\eqref{eq:CCPB} with
boundary conditions \cite{stern1924theorie,lee2010new,lee2014charge,lee2015boundary}
\begin{equation}\label{eq:SternBC}
\phi(-L/2)-\delta\phi_x(-L/2)=-V, \quad \phi(L/2)+\delta\phi_x(L/2)=V.
\end{equation}
Figures~\ref{fig:sternInfinite}A and~\ref{fig:sternInfinite}B presents the graphs~$\phi_x(0)={\rm tol}$ and~$|\phi(0;V,L)-\phi(0;V,L=\infty)|={\rm tol}$ for~$\delta=0$ and~$\delta=0.05$, respectively.  Similar to Section~\ref{sec:finiteDomainEffects}, the computation of these curves relies on the results Lemma \ref{lem:explicitXOfPhi} adapted to account for a Stern Layer.  Particularly, \Cref{lem:explicitXOfPhi} is also used to approximate $\phi_x(L/2)$.  We note, however, that these curves can be computed numerically, without relying on asymptotic analysis.

The PNP model assumes point charges, whereas the Stern layer is due to the finite size of the ions.  Therefore, a comparison between Figure~\ref{fig:sternInfinite}A and~\ref{fig:sternInfinite}B reveals how the finite size of the charges qualitatively impacts the model behavior in finite domains.  We observe that finite domain effects are observed at higher applied voltages~$V$, but that these effects are significant at domain sizes of similar orders of magnitude.

\begin{figure}[!h]
	\centering
	\includegraphics[width=0.9\textwidth]{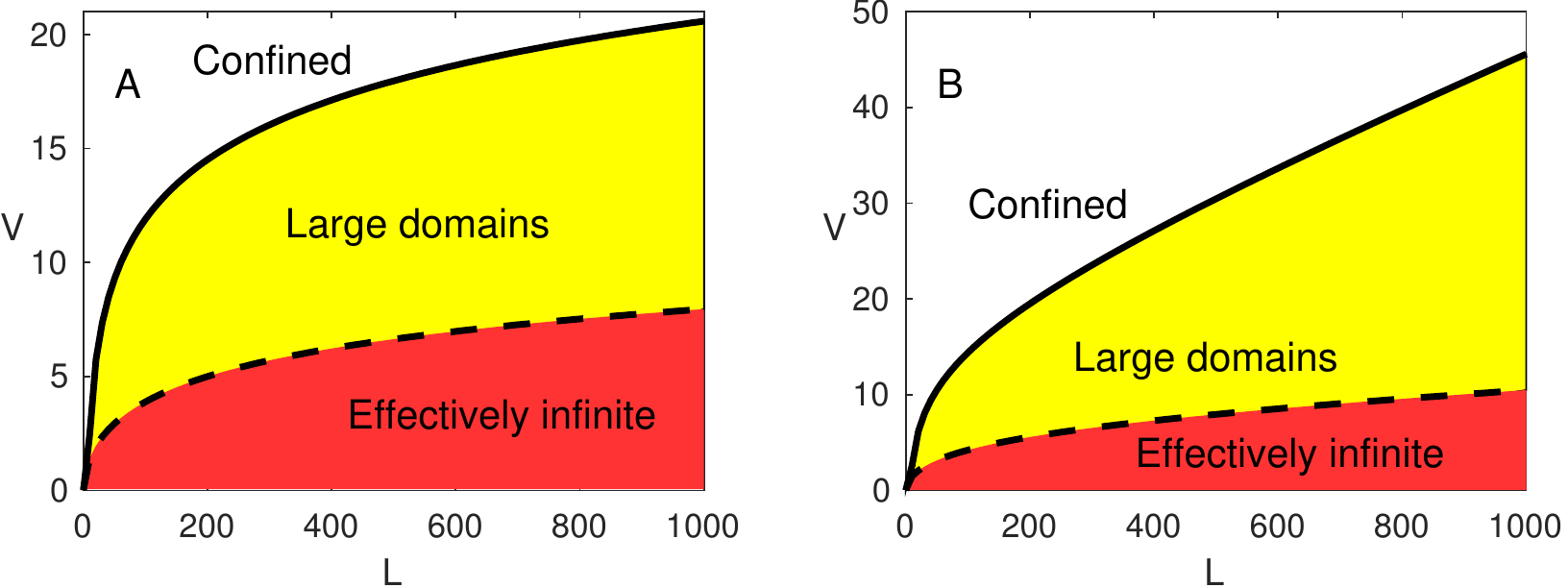}
	\caption{Graphs~$\phi_x(0)={\rm tol}$ (solid) and~$|\phi(0;V,L)-\phi(0;V,L=\infty)|={\rm tol}$ (dashed) for~${\rm tol}=0.05$ for steady solutions~$\phi(x)$ of PNP-Stern (\ref{eq:CCPB},\ref{eq:SternBC}) with A: $\delta=0$ and B:~$\delta=0.05$.
These curves divide the parameter space into three distinct regions corresponding to confined domains, large enough domains so that finite-domain effects are negligible, and an intermediate region in which finite-domain effects are significant near the boundaries.  
}\label{fig:sternInfinite}
\end{figure}

\section{Higher dimensions}\label{sec:highdim}
This study focuses on the study of finite domain effects in 
the one-dimensional case.  In particular, finite domain effects are significant when 
\[
|1-\alpha|\not\ll 1,
\]
see Section~\ref{sec:finiteDomainEffects}.

We now consider higher dimensions, and provide two concrete examples in which back-of-the-envelope computations based on the insights of this study are used to attain an estimate of the domain sizes at which finite-domain effects are significant.   Such estimates are useful, for example, to determine the size of a computational domain in numerical simulations or to better guide the modeling of systems with large but finite domains.

Following standard derivation and standard non-dimensionalization~\eqref{eq:scaledVars}, see, e.g.,~\cite{bazant2009towards}, the steady-state solution of the (non-dimensional) Poisson-Nernst-Planck equation in a domain~$\Omega\subset \mathbb{R}^d$ with no-flux boundary conditions and where the system is globally electroneutral
\begin{equation}\label{eq:globaly-electroneutral}
\frac1{|\Omega|}\int_\Omega p\,d{\bf x}=\frac1{|\Omega|}\int_\Omega n\,d{\bf x}=1,
\end{equation}
is given by 
\[
\nabla p+p\nabla \phi=0,\quad \nabla n-n\nabla \phi=0,\quad \nabla\cdot(\epsilon({\bf x})\nabla \phi)=\frac{n-p}2,
\]
where~$\epsilon({\bf x})$ is the relative dielectric constant.
Therefore,
\begin{subequations}\label{eq:Boltzmann_2D}
\begin{equation}
p=c e^{-\phi},\quad n=c e^{\phi}.
\end{equation}
Taking an average over both hands, and using~\eqref{eq:globaly-electroneutral} yields
\begin{equation}\label{eq:alpha_2D}
c=\alpha:=\left(\frac1{|\Omega|}\int_\Omega e^{\phi}dx\right)^{-1}=\left(\frac1{|\Omega|}\int_\Omega e^{-\phi}dx\right)^{-1}.
\end{equation}
\end{subequations}
When~$\alpha=c=1$, the charge distribution~\eqref{eq:Boltzmann_2D} identifies with the Boltzmann distribution attained at infinite domains.
Thus, similar to the one-dimensional case,~$|1-\alpha|$ is a measure for the magnitude of finite domain effects.  

\subsection*{Ion channels}
Ion channels are protein molecules that conduct ions (such as Na$^+$, K$^+$, Ca$^{2+}$, and Cl$^-$ that might be named bioions because of their universal importance in biology) through a narrow pore of fixed charge formed by the amino acids of the channel protein. Membranes are otherwise quite impermeable to natural substances, so channels are gatekeepers for cells and act as natural nano-valves.  Continuum mean-field theories of electrolytes, which are generalizations of Poisson-Nernst-Planck (PNP) models, have been widely used in studies of ion channels during the last two decades, for reviews see~\cite{eisenberg2012crowded,gillespie2015review,boda2014monte} and the references within.  These studies, however, focus almost solely on the channel while assuming the electrolyte bathes connected by the channel are large enough to neglect finite-domain effects.  

We now utilize the theory developed in this study to provide an estimate of the bath volume at which finite-domain effects may be significant.  The analysis applies to a general ion channel.  Nevertheless, in what follows, we adapt quantities that roughly correspond to RyR Calcium channels~\cite{fill2002ryanodine}.  We note that the analysis applied only to no-flux boundary conditions on all boundaries, while in many scenarios ion channel models take into account other boundary conditions.

Consider a narrow cylindrical channel connecting two electrolyte bathes.  The domain~$\Omega$ is therefore the union of the channel region~$\Omega_{\rm channel}$ and the left and right bath regions~$\Omega_{\rm bath}^L$ and~$\Omega_{\rm bath}^R$, respectively,
\[
\Omega=\Omega^L_{\rm bath} \cup\Omega_{\rm channel}\cup\Omega^R_{\rm bath}.
\]
The bath concentrations are taken to be~$\bar{c}_p=\bar{c}_n=0.1M$ (molar).  The permanent negative charge is assumed to be counter-balance by a positive charge concentration of~$18M$ inside the channel region, namely~$p_{\rm channel}/\bar{c}_p=180$.  
Under the simplifying assumption that the electric potential inside the channel~$\phi_{\rm channel}$ is uniform (aka resides in the Donnan equilibrium), then by~\eqref{eq:Boltzmann_2D},
\begin{equation}\label{eq:p_channel}
\frac{p_{\rm channel}}{\bar{c}_p}=180=\alpha\,e^{-\phi_{\rm channel}},\end{equation}
where the reference potential is taken to be~$\phi=\phi_{\rm bath}=0$ at the bath regions.

Finite-domain effects are negligible when~$\alpha\approx1$.  In this case,
\begin{equation*}
\begin{split}
\alpha^{-1}=\frac1{|\Omega|}\int_{\Omega}e^{-\phi} d{\bf x}&=\frac2{|\Omega|}\int_{\Omega_{\rm bath}}e^{-\phi_{\rm bath}} d{\bf x}+\frac1{|\Omega|}\int_{\Omega_{\rm channel}}e^{-\phi_{\rm channel}} d{\bf x}\\&=2\frac{|\Omega_{\rm bath}|}{|\Omega|}+\frac{180}{\alpha}\frac{|\Omega_{\rm channel}|}{|\Omega|},
\end{split}
\end{equation*}
where the last equality is due to the choice of the reference pontential~$\phi=\phi_{\rm bath}=0$, and due to~\eqref{eq:p_channel}.
Hence,
\[
\alpha^{-1}=\frac1{|\Omega|}\int_{\Omega}e^{-\phi} d{\bf x}=1+\frac{179}2\delta+O(\delta^2) \,\qquad \delta=\frac{|\Omega_{\rm channel}|}{|\Omega_{\rm bath}|}.
\]
Therefore, for example, to maintain an `error' below~$1\%$, namely to assume that the charge concentration identifies with the Boltzmann distribution up to a~$1\%$ error, one needs to choose~$\delta$ such that~$\frac{179}2\delta<{\rm error}=0.01$.  Therefore,
the bath dimension should be roughly $1/\delta\approx9\cdot 10^3$ times larger then the channel dimension.  

\subsection*{Porous electrodes}
Let us assume two porous electrodes separated by a bulk region.
The domain~$\Omega$, which is accessible to the electrolyte, is therefore the union of the bulk region~$\Omega_{\rm bulk}$, and the
two electrode regions~$\Omega_{\rm electrode}^-$ and~$\Omega_{\rm electrode}^+$,
\[
\Omega=\Omega^+_{\rm electrode} \cup\Omega_{\rm bulk}\cup\Omega^-_{\rm electrode}.
\]
Under the simplifying assumption that the electric potential inside the electrode pores~$\phi^\pm_{\rm electrode}$ is uniform (aka resides in the Donnan equilibrium), and by choosing the reference potential to be~$\phi=\phi_{\rm bulk}=0$, one attains
\begin{equation*}
\begin{split}
\alpha^{-1}&:=\frac1{|\Omega|}\int_{\Omega}e^{-\phi} d{\bf x}\\&=\frac1{|\Omega|}\int_{\Omega_{\rm bulk}}e^{-\phi_{\rm bulk}} d{\bf x}+\frac1{|\Omega|}\int_{\Omega^-_{\rm electrode}}e^{-\phi^-_{\rm electrode}} d{\bf x}+\frac1{|\Omega|}\int_{\Omega^+_{\rm electrode}}e^{-\phi^+_{\rm electrode}} d{\bf x}\\&=
\frac{|\Omega_{\rm bulk}|}{|\Omega|}+\frac{|\Omega^-_{\rm electrode}|}{|\Omega|}e^{-\phi^-_{\rm electrode}}+\frac{|\Omega^+_{\rm electrode}|}{|\Omega|}e^{-\phi^+_{\rm electrode}}.
\end{split}
\end{equation*}
In what follows, we further assume the electrodes width and porosity is the same~$|\Omega^-_{\rm electrode}|=|\Omega^+_{\rm electrode}|$, and consider the case~$\phi^+_{\rm electrode}=-\phi^-_{\rm electrode}$.  Thus,
\begin{equation*}
\begin{split}
\alpha^{-1}&=
\frac{|\Omega_{\rm bulk}|}{|\Omega|}+2\frac{|\Omega_{\rm electrode}|}{|\Omega|}\cosh(\phi_{\rm electrode}).
\end{split}
\end{equation*}

Finite domain effects are negligible when~$\alpha\approx1$. 
To ensure that~$\alpha>1-\delta$,
\[
\frac{|\Omega_{\rm bulk}|}{|\Omega_{\rm electrode}|}>2\frac{(1-\delta)\cosh(\phi_{\rm electrode})-1}\delta.
\]
Thus, for example, if the Donnan potential within the pores is~$\phi^{\rm dimensional}_{\rm electrode}=0.25V$ and the porosity of the electrodes is 0.3, then to maintain an `error' below~$1\%$, $\delta = 0.01$, 
\[
\frac{|\Omega_{\rm bulk}|}{|\Omega_{\rm electrode}|}>0.3\cdot200\left[0.99-e^{-\frac{q}{k_BT}\phi^{\rm dimensional}_{\rm electrode}}\right]\approx 59.4.
\]
Namely, finite-domain effects are negligible when the separation length between the porous electrodes exceeds~$\approx$60 times the width of the electrodes.  In a capacitive deionization (CDI) device, the  separation length between the porous electrodes is comparable to the electrode width, hence finite domain effects are expected to be significant in such a case. 

\section{Conclusions}\label{sec:conc}

In this study, we present an approximation for the steady-state solution of the PNP model in the asymptotic limit of a large, but finite, domain. 
This approximation allows distinguishing between confined domains, large domains in which finite-domain effects are significant and yet larger domains in which finite-domain effects are negligible, and to quantify finite domain effects in large domains.  

Surprisingly, we found that even for relatively large domains, finite domain effects are significant.  For example, in a reasonable scenario (bulk concentration of~$\bar{c}=0.1M$ and applied voltage of~$0.5V$), finite domain effects are dominant up to a micron scale ($L=1000\lambda_D$).  This is in contrast to the common approach that assumes long-range forces persist only up to a few Debye lengths (nanometer scale) region beyond the charged surface, as occurs for low applied voltages~$V\ll1$~\cite{huckel1923theory}.
Particularly, the study suggests that finite domain effects may be significant in physical systems such as capacitive deionization cells \cite{biesheuvel2011theory,porada2013review},  submicron gap capacitors \cite{hourdakis2006submicron,wallash2003electrical,chen2006electrical}, and many applications of microfluidics \cite{whitesides2006origins}.

The reason finite domain effects are significant in a relatively large domain is that the PNP steady-state solution has non-local dependence under no-flux boundary conditions.  Namely, as ions concentrate near the boundary to screen charge, they are depleted from the interior of the domain.  This property is common to a wide family of generalized PNP models with no-flux boundary conditions.  Therefore, we expect  finite domain effects to be significant in  generalized PNP models.  Analysis of finite-domain effects in additional PNP-type models, as well as further consideration of applications, will be published elsewhere.

\appendix
	\section{Proof of Lemma~\ref{prop:expansion_refined}} \label{app:lem_expansion_refined}	
For all~$0<\eta<\phi$,~$I(\phi;\varepsilon)=I_1(\eta;\varepsilon)+I_2(\phi;\varepsilon)$, where
\begin{equation}\label{eq:app_I0I1}
I_1(\eta;\varepsilon)=\int_0 ^\eta \frac{dx}{\sqrt{ \sinh^2 \left( x/2 \right) + \varepsilon^2}} ,\quad I_2(\phi;\varepsilon)=\int_\eta^\phi \frac{dx}{\sqrt{ \sinh^2 \left( x/2 \right) + \varepsilon^2}}.
\end{equation}
To approximate~$I_1$, first note that a high-order expansion about~$x=0$ yields
\begin{equation}\label{eq:sinh_2nd_expansion}
\sinh^2 \left( \frac{x}2 \right)+\varepsilon^2=\left(\frac{x^2}4+\varepsilon^2\right)\left[1+\frac{x^4}{48(\frac{x^2}4+\varepsilon^2)}+\frac{O(x^6)}{48(\frac{x^2}4+\varepsilon^2)}\right],\quad 0<x\ll1.
\end{equation}
The expansion~\eqref{eq:sinh_2nd_expansion} is an asymptotic expansion and in particular, 
\begin{equation}\label{eq:explicitOx4}
\frac{x^4}{48(\frac{x^2}4+\varepsilon^2)}\ll1,
\end{equation}
for all~$x\ll1$.  Therefore, knowledge of the specific form of the correction terms allows relaxing constraints on~$\eta$ that were applied in the proof of Proposition~\ref{prop:expansion}.  The approximation of~$I_2$, detailed below, will require~$\varepsilon\ll\eta$, hence overall~$\varepsilon\ll\eta\ll1$.  In what follows, we consider~$\eta=\sqrt{\varepsilon}$.

Substituting~\eqref{eq:sinh_2nd_expansion} in~\eqref{eq:app_I0I1} yields
\begin{equation*}
\begin{split}
I_1(\phi;\varepsilon)&=\int_0 ^\phi \frac{dx}{\sqrt{\frac{x^2}4+\varepsilon^2}\sqrt{1+\frac{x^4+O(x^6)}{48(\frac{x^2}4+\varepsilon^2)}}}\\&=
\int_0^\phi \frac{1}{\sqrt{\frac{x^2}4+\varepsilon^2}}\left[1-\frac1{24}\frac{x^4+\tilde E_1(x)}{x^2+4\varepsilon^2}\right]dx
\\&=\left(2-\frac{\varepsilon^2}2\right)\,\mbox{arcsinh}\left(\frac{\phi}{2\varepsilon}\right) - \frac{\phi^3 + 12\phi\varepsilon^2}{24\sqrt{\phi^2 + 4 \varepsilon^2}}+O(\varepsilon^2)\\
\\&=2\,\mbox{arcsinh}\left(\frac{\phi}{2\varepsilon}\right) - \frac{\phi^2}{24}+O(\varepsilon^2),\quad 0<\phi<\eta=\sqrt{\varepsilon},
\end{split}
\end{equation*}
where the magnitude of the approximation error is attained by direct integration of~$I_1$ with an error term of the form~$\tilde E_1(x)=a_6 x^6+(a_8+b_8/(4x^2+\varepsilon^2))x^8+\cdots$.

To approximate~$I_2$ note that~$ \sinh x/2\gg\varepsilon$ when~$x\gg \varepsilon$.  In this case,
\[
\sqrt{\sinh^2 \frac{x}2+\varepsilon^2}=\sinh \frac{x}2 \sqrt{1+\frac{\varepsilon^2}{\sinh ^2\frac{x}2}}=\left(1+\frac{\varepsilon^2}{2\sinh ^2\frac{x}2}+O\left(\frac{\varepsilon^4}{x^4}\right)\right)\sinh \frac{x}2.
\]
Thus, 
\begin{equation*}
\begin{split}
I_2&=\int_\eta^\phi \frac{1}{\sinh \frac{x}2}\frac{1}{1+\frac{\varepsilon^2}{2\sinh ^2\frac{x}2}+O\left(\frac{\varepsilon^4}{x^4}\right)}dx=
\int_\eta^\phi \left[1-\frac{\varepsilon^2}{2\sinh ^2\frac{x}2}+O\left(\frac{\varepsilon^4}{x^4}\right)\right]\frac{dx}{\sinh \frac{x}2}
\\&=(4+\varepsilon^2)[{\rm arctanh}(e^{\eta /2})-{\rm arctanh}(e^{\phi/2})]+\frac{\varepsilon^2}2 \left(\frac{\cosh\frac{\phi}2}{\sinh^2\frac{\phi}2}-\frac{\cosh\frac{\eta}2}{\sinh^2\frac{\eta}2}\right)+O(\varepsilon^2),
\end{split}
\end{equation*}
and
\[
I_1+I_2=2\log\frac{4}{\varepsilon}-4{\rm arctanh}(e^{\phi/2})+\frac{\varepsilon^2}2 \frac{\cosh\frac{\phi}2}{\sinh^2\frac{\phi}2}+O(\varepsilon^2\log\varepsilon).
\]
Note that the error is~$O(\varepsilon)$ since for~$\phi=O(\eta)$
\[
\frac{\varepsilon^2}2 \frac{\cosh\frac{\phi}2}{\sinh^2\frac{\phi}2}=O(\varepsilon).
\]

\end{document}